\title{\textbf{The Grothendieck group of completed distribution algebras}}
\author{Tamas Csige}
\newcommand{\drho}{D_{r}(G,K)}
\newcommand{\drhoh}{D_{r}(H,K)}
\newcommand{\filg}{\textnormal{Fil}^0(D_r(G,K))}
\newcommand{\fillg}{\textnormal{Fil}^1(D_r(G,K))}
\newcommand{\gr}{\mathrm{gr}^{\cdot}}
\newcommand{\grr}{\textnormal{gr}^0}
\newcommand{\f}{\textnormal{Fil}}
\newtheorem{thm}{Theorem}[section]
\newtheorem{pro}[thm]{Proposition}
\newtheorem{lem}[thm]{Lemma}
\newtheorem{df}[thm]{Definition}
\newtheorem{rem}[thm]{Remark}
\theoremstyle{definition}
\begin{document}
\vspace{2cm}
\maketitle
\begin{abstract} \noindent Let $G$ be a compact $p$-adic analytic group with no element of order $p$ and let $H$ be the maximal uniform normal subgroup of $G$. Let $K$ be  a finite extention of $\mathbb{Q}_p$. We show that the Grothendieck group of the completion of the algebra of locally analytic distributions on $G$ is isomorphic to $\mathbb{Z}^c$  where $c$ is the number of conjugacy classes in $G/H$ relative prime to $p$, provided that $K$ is big enough. In addition we will see the algebra $K[[G]]$ of continuous distributions on $G$ has the same Grothendieck group.

\vspace{1cm} \noindent \textbf{Keywords}: Grothendieck group, Algebras of $p$-adic distributions, Iwasawa algebra
\newline \noindent \textbf{MSC}:18F30, 22E35, 20G05
\end{abstract}
\vspace{2cm}
\section{Introduction}
\noindent Let $p$ be a prime. Let $\mathbb{Q}_p \subseteq K$ be a finite extention of $\mathbb{Q}_p$. $K$ is a complete and discretely valued field and denote its residue field by $k$. Locally analytic representations of a locally $\mathbb{Q}_p$-analytic group $G$ were systematicaly studied by P. \ Schneider and J. \ Teitelbaum in \cite{S2}, \cite{S3}, \cite{S4}, \cite{S5}, \cite{S6}, \cite{S1}. In general, such a representation is given by a continuous action of $G$ on a locally convex topological vector space $V$ over a spherically complete extention $\mathbb{Q}_p \subseteq K$  such that the orbit maps $g \mapsto gv$ are locally analytic functions on $G$. We would like to mention that any finite field extention of $\mathbb{Q}_p$ is locally compact and hence spherically complete. On the other hand e.g. $\mathbb{C}_p$ is not spherically complete. In special cases the class of such representations includes many interesting examples, such as finite dimensional algebraic representations and smooth representations of Langlands theory. A reasonable theory of such representations requires the identification of a finiteness condition that is broad enough to include the important examples and yet restrictive enough to rule out pathologies. The appropriate finiteness condition is called "admissibility". The admissible locally analytic representations form an abelian category. 
\bigskip  \newline \noindent The way of characterization of such representations is via algebraic approach. If $V$ is in general a locally analytic representation, then its continuous strong dual $V_b^{'}$ with its strong topology becomes a module over the algebra $D(G,K)$. This algebra is the continuous dual of the locally $\mathbb{Q}_p$-analytic, $K$-valued functions on G, with multiplication given by convolution. When G is compact, $D(G,K)$ is a Fr\'echet-Stein algebra, but in general  neither noetherian nor commutative. The modules of the subcategory of the images of admissible representations are called coadmissible modules. These modules play crucial role on the theory of locally analytic representations. 
 \newline \noindent As mentioned above the distribution algebra is Fr\'echet-Stein, i.e. it is the projective limit of noetherian Banach algebras $D_r(G,K)$ where  these algebras are certain completions of $D(G,K)$ with respect to norms depending on $r \in p^\mathbb{Q}$ such that $1/p \leq r <1$ and the connecting maps are flat. Analogously, coadmissible modules are defined to be projective limits of certain finitely generated $\drho$-modules $M_r$. For details see section  \ref{dro}
 \newline \noindent In a recent paper \cite{Z} G. \ Z\'abr\'adi studied the Grothendieck group of  the completed distribution algebra $\drho$ in the special case when the group $G$ is a uniform pro-$p$ group and showed that $K_0(\drho) \simeq \mathbb{Z}$, i.e. every finitely generated projective module is stably free. In this paper we give a generalization and show the following theorem:
\begin{thm}\label{main}\textnormal{Le $G$ be a $p$-adic analytic group that has no element of order $p$ and let $H$ be its maximal uniform subgroup. If $K$ is large enough depending on $r$ and $G/H$ then the Grothendieck group of the ring $\drho$ is isomorphic to $\mathbb{Z}^c$ where $c$ denotes the number of conjugacy classes in $G/H$ of order relative prime to $p$.
}
\end{thm}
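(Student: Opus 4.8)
The plan is to use that $H$ is a closed normal subgroup of $G$ of finite index, so that, writing $\Gamma\defeq G/H$, the ring $\drho$ is a crossed product $\drhoh\ast\Gamma$: inside $D(G,K)$ one has $\drho=\bigoplus_{g\in\Gamma}\drhoh\,\delta_{g}$ (a finite direct sum, $g$ ranging over coset representatives), and this crossed-product description is compatible with the $r$-norm. I would then run the filtered-to-graded argument of \cite{Z}: the $r$-norm induces a complete Zariskian filtration on $\drho$ whose associated graded ring $\mathrm{gr}\,\drho$ is Noetherian (it is module-finite over $\mathrm{gr}\,\drhoh$), and for such rings every finitely generated projective $\drho$-module lifts a finitely generated projective $\mathrm{gr}\,\drho$-module, yielding an isomorphism $K_0(\drho)\cong K_0(\mathrm{gr}\,\drho)$. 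Thus the task is to compute $K_0$ of the associated graded ring.

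Next I would identify $\mathrm{gr}\,\drho$ explicitly. The $2$-cocycle defining the crossed product $\drhoh\ast\Gamma$ takes its values among the group-like elements $\delta_{h}$ with $h\in H$; since $\delta_{h}-1$ has strictly positive filtration degree, each such $\delta_{h}$ becomes the identity in the associated graded, so the cocycle trivialises and $\mathrm{gr}\,\drho\cong\mathrm{gr}\,\drhoh\rtimes\Gamma$, an honest skew group ring. By the structure theory underlying \cite{Z} (Schneider--Teitelbaum) -- and this is where the hypothesis that $K$ is large enough relative to $r$ and $\Gamma$ is used -- one has $\mathrm{gr}\,\drhoh\cong k[\epsilon^{\pm1}][X_{1},\dots,X_{d}]$ with $\epsilon$ the principal symbol of a uniformiser of $K$ and $d=\dim H$; the $\Gamma$-action is by $k[\epsilon^{\pm1}]$-algebra automorphisms, fixing $\epsilon$ and acting linearly on $X_{1},\dots,X_{d}$ through the conjugation action of $G$ on $H/\Phi(H)$.

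It then remains to prove $K_{0}\bigl(k[\epsilon^{\pm1}][X_{1},\dots,X_{d}]\rtimes\Gamma\bigr)\cong\mathbb{Z}^{c}$. First I would kill the polynomial variables by an equivariant form of Quillen's theorem: finitely generated projective modules over $k[\epsilon^{\pm1}][X_{1},\dots,X_{d}]$ that are equivariant for the linear $\Gamma$-action are extended from $k[\epsilon^{\pm1}]$ (equivalently, the affine-bundle $\mathrm{Spec}\,k[\epsilon^{\pm1}][X_{1},\dots,X_{d}]\to\mathrm{Spec}\,k[\epsilon^{\pm1}]$ induces an isomorphism on $\Gamma$-equivariant $K_{0}$), whence $K_{0}\bigl(k[\epsilon^{\pm1}][X_{1},\dots,X_{d}]\rtimes\Gamma\bigr)\cong K_{0}\bigl(k[\epsilon^{\pm1}]\rtimes\Gamma\bigr)=K_{0}\bigl(k[\epsilon^{\pm1}][\Gamma]\bigr)$, the action on $\epsilon$ being trivial. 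Second, $k[\epsilon^{\pm1}][\Gamma]$ carries the nilpotent two-sided ideal $k[\epsilon^{\pm1}]\otimes_{k}J(k[\Gamma])$ with quotient $k[\epsilon^{\pm1}]\otimes_{k}\bigl(k[\Gamma]/J(k[\Gamma])\bigr)$; since $K_{0}$ is insensitive to nilpotent ideals and, for $K$ (hence $k$) a splitting field for $\Gamma$, one has $k[\Gamma]/J(k[\Gamma])\cong\prod_{i=1}^{c}M_{n_{i}}(k)$ with $c$ the number of $p$-regular conjugacy classes of $\Gamma$ by Brauer's theorem -- equivalently the number of conjugacy classes of order prime to $p$ -- we obtain $K_{0}\bigl(k[\epsilon^{\pm1}][\Gamma]\bigr)\cong K_{0}\bigl(\prod_{i=1}^{c}M_{n_{i}}(k[\epsilon^{\pm1}])\bigr)=\mathbb{Z}^{c}$, because $k[\epsilon^{\pm1}]$ is a principal ideal domain.

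\textbf{The main obstacle} I anticipate is the first step of the last paragraph. Because $|\Gamma|=|G/H|$ may be divisible by $p$, the ring $k[\epsilon^{\pm1}][X_{1},\dots,X_{d}]\rtimes\Gamma$ generally has infinite global dimension, so one cannot invoke Quillen's theorem over a regular base directly; the equivariant ``Serre conjecture'' statement for a linearisable finite-group action has to be set up (or imported) with care, and it is the technical heart of the argument. By comparison, pinning down the crossed-product presentation of $\drho$ compatible with its filtration, checking that the cocycle dies in the associated graded, verifying the precise ``$K$ large enough'' hypotheses (so that $\mathrm{gr}\,\drhoh$ has the stated form and $k$ splits $k[\Gamma]$), and the closing nilpotent-ideal and Morita reductions, are all routine.
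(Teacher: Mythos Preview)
Your first step---the claimed isomorphism $K_0(\drho)\cong K_0(\mathrm{gr}\,\drho)$ ``by the filtered-to-graded argument of \cite{Z}''---is a genuine gap. The assertion that for a complete Zariskian filtered ring with Noetherian associated graded one always has $K_0(R)\cong K_0(\mathrm{gr}\,R)$ is false. Take $R=K[\Gamma]$ with the $\pi$-adic filtration, where $\Gamma$ is finite with $p\mid|\Gamma|$ and $K$ contains enough roots of unity: then $K[\Gamma]$ is semisimple with one simple per conjugacy class, so $K_0(K[\Gamma])\cong\mathbb{Z}^{|\mathrm{Conj}(\Gamma)|}$, whereas $\mathrm{gr}\,R=k[\epsilon^{\pm1}][\Gamma]$ and your own nilpotent-ideal computation gives $K_0(k[\epsilon^{\pm1}][\Gamma])\cong\mathbb{Z}^{c}$ with $c$ the number of $p$-regular classes, strictly smaller. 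In \cite{Z} the graded ring is a commutative Laurent--polynomial ring, hence regular, and that regularity is what makes the passage to the graded side work there; it is exactly what fails once you twist by $\Gamma$ with $p\mid|\Gamma|$.

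A related red flag: your argument never invokes the hypothesis that $G$ has no element of order $p$. If your outline were valid it would prove the theorem without that hypothesis, but the conclusion is false then (already for $G=\mathbb{Z}/p$, where $H=\{1\}$, $\drho=K[\mathbb{Z}/p]$, $K_0=\mathbb{Z}^{p}$, yet $c=1$). So something in your chain must use the hypothesis, and the only candidate is the unjustified first step.

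The paper circumvents this by avoiding $\mathrm{gr}\,\drho$ altogether as the bridge. It works instead with the unit ball $\filg$: one shows $K_0(\filg)\cong K_0(\grr\drho)\cong\mathbb{Z}^{c}$ by idempotent lifting along the complete ideal $\fillg$, and---crucially---proves that $\filg$ has finite global dimension, so $G_0=K_0$ there. The localization sequence for inverting $p$ then gives a surjection $\mathbb{Z}^{c}\twoheadrightarrow K_0(\drho)$. The matching injection does not come from the graded ring; it comes from the faithfully flat embedding $K[[G]]\hookrightarrow\drho$, together with a separate computation $K_0(K[[G]])\cong\mathbb{Z}^{c}$ via $\mathcal{O}_K[[G]]$. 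It is precisely in this last computation that the hypothesis ``no element of order $p$'' enters, through Brumer's theorem that $\mathcal{O}_K[[G]]$ has finite global dimension. Incidentally, the step you flag as the main obstacle (the equivariant Quillen--Suslin reduction) is handled in the paper by a direct splitting argument at the level of $\grr$; and your nilpotent-ideal trick for $K_0(k[\epsilon^{\pm1}][\Gamma])$ is correct and tidy, but it computes $K_0$ of the wrong ring for the purpose at hand.
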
 \noindent How $K$ depends precisely on $r$ and $G/H$  is the following: On one hand if $r = p^{s/t}$ then we assume $\sqrt[t] {p} \in K$. On the other hand we require $K$ to be the splitting field of $G/H$ (for the definition of splitting field, see  \ref{split}). These assumptions are fairly mild, for example in the second case due to a theorem of Brauer, if $n$ is the exponent of $G/H$ and $K$ contains a primitive $n$th root of unity then $K$ is a splitting field for not just $G/H$ but for any subgroup.
\newline \noindent The  assumption on $G$ is also weak. For example if $p \geq 5$, $GL_2(\mathbb{Z}_p)$ has no element of order $p$. 
\newline \noindent  Another very interesting result will be proved as a side-effect: The algebra $K[[G]]$ of continuos distributions on $G$ which is just the localization of $\mathcal{O}_K[[G]]$ where $\mathcal{O}_K$ is the ring of integers in $K$ (when $K=\mathbb{Q}_p$ it is just the localization of the  Iwasawa algebra over $G$) has the same Grothendieck group. For the definition of $K[[G]]$ see Section \ref{iwa}.
 \newline \noindent In the next section we recall some facts and known results that we will use. Then  using the filtration induced by the norm dependig on $r$ we show that the Grothendieck group of the $0$th degree subring of the associated graded ring of $\drho$ is isomorphic to $\mathbb{Z}^c$, and also that $K_0(\grr) \simeq K_0(\filg)$. The next part will show the crucial fact that  $\filg$ has finite global dimension if $G$ has no element of order $p$. Using all these results and K-theoretic tools we prove that the $K_0$ of $K[[G]]$ is isomorphic to $\mathbb{Z}^c$ and then conclude the same for $\drho$.
\bigskip \newline \noindent \textbf{Acknowledgments.}  The author was supported by the Strategic Initiative Funding of  Humboldt-Universit\"at zu Berlin. \newline \noindent The author would like to thank to Gergely Z\'abr\'adi for his constant help and valuable comments.
and all the useful discussions.
\section{Preliminaries}\label{pre} 
\label{drho}\subsection{The algebra $D_{r}(G,K)$}\label{dro} \noindent Let $p$ be a prime and $p > 2$ (we exclude the case $p =2$ for simplicty reasons.) Let $\mathbb{Q}_p \subseteq K$ and $G$ be a locally $\mathbb{Q}_p$-analytic group. Recall from  \cite{S1} that when $H$ is uniform there is a bijective global chart \[ \mathbb{Z}_p^d \rightarrow G\] \[ (x_1, \dots,x_d) \mapsto (h_1^{x_1}, \dots, h_d^{x_d})\] where $h_1,\dots,h_d$ is a fixed minimal, ordered topological generating set of $G$. Putting $b_i: = h_i - 1 \in \mathbb{Z}[G]$, one can identify $D(G,K)$ with the ring of all formal series \[ \lambda = \sum d_\alpha b^\alpha \] where $\alpha$ is a multiindex ($\alpha = (\alpha_1,\dots,\alpha_n)$), $b^\alpha = b_1^{\alpha_1}\cdot \cdot \cdot b_d^{\alpha_d}$ and $d_\alpha \in K$ such that the set $\{ |d_\alpha| r^{|\alpha|}\}$ is bounded for any $0 < r < 1$.  When $r \in p^{\mathbb{Q}}$ there is a multiplicative norm $|| \cdot ||_r$ on $D(G,K)$ by Theorem $4.5$ in \cite{S1} given by \[ ||\lambda||_r := \textnormal{sup}_{\alpha} |d_\alpha|\rho^{|\alpha|}\] Whenever $G$ is an arbitrary compact $\mathbb{Q}_p$-analytic group $G$ has a uniform normal subgroup of finite index  (for details, see e.g. \cite{Dix} Theorem $8.32.$), thus we can choose coset representatives $g_1,\dots,g_n$, $n = G/H$ and see that $D(G,K)$ is a crossed product of $D(H,K)$ and the group $G/H$ (for details see \ref{cr}). Hence $D(G,K) = \bigoplus D(H,K)g_i$ (we abuse the notation and denote the Dirac distributions $\delta_{g_i}$ by $g_i$). \newline \noindent  Define norms \[q_r(\mu) : = \textnormal{max}(||\lambda_1||_r,\dots,||\lambda_n||_r)\] where $\mu \in D(G,K), \mu = \sum \lambda_i(\mu) g_i $. We need to show first that the norm is still submultiplicative given that $r \in p^{\mathbb{Q}}$.
\begin{pro}\label{norm}\textnormal{Let $G$ and $r$ be as above. The maximum norm is submultiplicative on $D(G,K)$. }
\end{pro}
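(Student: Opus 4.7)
Write $\mu = \sum_{i=1}^n \lambda_i g_i$ and $\nu = \sum_{j=1}^n \mu_j g_j$ with $\lambda_i, \mu_j \in D(H,K)$, and exploit the crossed product structure to rewrite
\[
\mu \nu \;=\; \sum_{i,j} \lambda_i \, \sigma_i(\mu_j) \, \tau(i,j) \, g_{k(i,j)},
\]
where $\sigma_i(\cdot) = g_i (\cdot) g_i^{-1}$ is the conjugation automorphism of $D(H,K)$ (well-defined because $H \trianglelefteq G$), and $g_i g_j = \tau(i,j)\, g_{k(i,j)}$ with $\tau(i,j) \in H$ and $g_i g_j H = g_{k(i,j)} H$. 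Thus the coefficient of $g_k$ in $\mu\nu$ equals $\sum_{(i,j):\, k(i,j)=k} \lambda_i \sigma_i(\mu_j) \tau(i,j)$, and it suffices to bound the $\|\cdot\|_r$-norm of every such summand by $q_r(\mu)\, q_r(\nu)$.

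To this end I would establish two auxiliary facts about $\|\cdot\|_r$ on $D(H,K)$. (i) For every $h \in H$, one has $\|\delta_h - 1\|_r \leq r$, and in particular $\|\delta_h\|_r = 1$: write $h = h_1^{x_1} \cdots h_d^{x_d}$ with $x_i \in \mathbb{Z}_p$, expand $\delta_h = \prod_i (1+b_i)^{x_i}$ using the $p$-adic binomial series (with coefficients $\binom{x_i}{n} \in \mathbb{Z}_p$), observe that each factor $(1+b_i)^{x_i}-1 = \sum_{n \ge 1} \binom{x_i}{n} b_i^n$ has norm $\leq r < 1$, and induct on the product. (ii) For every $g \in G$, the conjugation $\sigma_g$ on $D(H,K)$ is norm non-increasing: since $H$ is normal, $\sigma_g(b_j) = g h_j g^{-1} - 1 = \delta_{h'} - 1$ for some $h' \in H$, so $\|\sigma_g(b_j)\|_r \leq r$ by (i); submultiplicativity of $\|\cdot\|_r$ then gives $\|\sigma_g(b^\alpha)\|_r \leq r^{|\alpha|}$, and any $\lambda = \sum_\alpha d_\alpha b^\alpha$ consequently satisfies $\|\sigma_g(\lambda)\|_r \leq \sup_\alpha |d_\alpha| r^{|\alpha|} = \|\lambda\|_r$.

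Combining these, the multiplicativity of $\|\cdot\|_r$ on $D(H,K)$ yields
\[
\|\lambda_i \sigma_i(\mu_j) \tau(i,j)\|_r \;=\; \|\lambda_i\|_r\, \|\sigma_i(\mu_j)\|_r\, \|\tau(i,j)\|_r \;\leq\; \|\lambda_i\|_r\, \|\mu_j\|_r \;\leq\; q_r(\mu)\, q_r(\nu),
\]
using $\|\tau(i,j)\|_r = 1$ from (i). The ultrametric inequality then gives $\|(\mu\nu)_k\|_r \leq q_r(\mu) q_r(\nu)$ for each $k$, and taking the maximum over $k$ produces $q_r(\mu \nu) \leq q_r(\mu) q_r(\nu)$. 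I expect the main obstacle to be step (ii): verifying that conjugation by $g_i$ is norm non-increasing on $D(H,K)$; everything else reduces to binomial identities and formal manipulations inside the crossed product.
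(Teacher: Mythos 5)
Your proof is correct and follows essentially the same route as the paper: decompose the product in the crossed product $D(H,K)*G/H$, reduce to norms of coefficients in $D(H,K)$, and use norm-invariance under conjugation by coset representatives together with $\|\delta_h\|_r = 1$ for $h \in H$. The only difference is one of detail: you carefully prove the two auxiliary estimates (via the $p$-adic binomial expansion of $\delta_h$ and the bound $\|\sigma_g(b_j)\|_r \leq r$) that the paper invokes more summarily, so this is a fleshed-out version of the same argument rather than a new approach.
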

\begin{proof} Let us take two elements $\mu_1 = \sum \lambda_i(\mu_1) g_i$ and  $\mu_2 = \sum \lambda_i(\mu_2) g_i$ from $D(G,K)$. Then \[ q_r(\mu_1\mu_2)=q_r (\sum_{i,j}\lambda_i(\mu_1)g_i\lambda_j(\mu_2)g_j)\]
We can write the expression above in the following form:
\[q_r (\sum_{i,j}\lambda_i(\mu_1)(g_i\lambda_j(\mu_2) g_i^{-1})g_ig_j)  \] The product $g_ig_j$ differs from a coset representative only by multiplication with an element $h_{i,j} \in H$, i.e. $g_ig_j = h_{i,j}g_k$. Hence it can be seen that the norm of the product is equal to the maximum of the values $||\sum_i \lambda_i(\mu_1) g_i\lambda_j(\mu_2)g_i^{-1}h_{i,j}||_r$. But we know that $||g_i\lambda_j(\mu_2)g_i^{-1}||_r = ||\lambda_j(\mu_2)||_r$. Moreover the norm is multiplicative and for any $h \in H$, $||h||_r = \prod ||h_i||_r^{x_i}$ but $||h_i||_r =||(h_i-1)+1||_r = \textnormal{max}\{1, ||h_i -1||_r\} = 1$ hence  \[q_r(\mu_1\mu_2) \leq \textnormal{max}(||\lambda_i(\mu_1)||_r||\lambda_j(\mu_2)||_r)= q_r(\mu_1)q_r(\mu_2)\]
\end{proof}
\noindent We denote the completion of $D(G,K)$ by $D_r(G,K)$. We recall the next result:
\begin{thm} \label{dglob} \textnormal{For any compact  locally $\mathbb{Q}_p$-analytic group $G$ the notherian Banach algebra $\drho$ is Auslander regular of global dimension $\leq d$. }
\end{thm}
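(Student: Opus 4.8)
\noindent This statement is a theorem of Schneider--Teitelbaum for $G$ uniform and of Schmidt in general, and my plan is to reproduce that argument. \textbf{Step 1 (reduction to the uniform case).} Choose a uniform open normal subgroup $H\trianglelefteq G$ and coset representatives $g_1,\dots,g_n$ with $n=|G/H|$; by Proposition \ref{norm} and the discussion preceding it, $\drho=\bigoplus_i\drhoh\,g_i$ is a crossed product of $\drhoh$ with $G/H$, in particular free of finite rank over $\drhoh$ both as a left and as a right module. Since $\drhoh$ is a $\mathbb{Q}$-algebra, $n$ is invertible in it, so for every $\drho$-module $N$ the module $N$ is a direct summand of $\drho\otimes_{\drhoh}N$ as a $\drho$-module; together with the fact that $\drho$ is projective over $\drhoh$ this forces $\mathrm{gl.dim}\,\drho=\mathrm{gl.dim}\,\drhoh$, and in the same way the Auslander condition on $\mathrm{Ext}$ and the preservation of grade descend from $\drhoh$ to $\drho$. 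So it suffices to treat $G=H$ uniform of dimension $d$.

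\textbf{Step 2 (the associated graded ring).} For $H$ uniform, filter $\drhoh$ by $\|\cdot\|_r$, or equivalently work with the unit ball $A^0\defeq\{\lambda:\|\lambda\|_r\le1\}$, a $\pi$-adically complete noetherian $\mathcal{O}_K$-algebra with $\drhoh=A^0[1/\pi]$ for $\pi$ a uniformiser of $K$. The essential input --- this is the content of Schneider--Teitelbaum's norm analysis and is where $1/p\le r<1$ and $\sqrt[t]{p}\in K$ are used --- is that the principal symbols of $b_1,\dots,b_d$ commute in the associated graded ring (the commutator $[b_i,b_j]$ lies in strictly smaller filtration than the product of the symbols) and are algebraically independent, so that $\gr\,\drhoh\cong\gr(K)[\sigma(b_1),\dots,\sigma(b_d)]$ and correspondingly $A^0/\pi A^0\cong k[X_1,\dots,X_d]$. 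Thus $\gr\,\drhoh$ and $A^0/\pi A^0$ are commutative regular noetherian rings; $A^0/\pi A^0$ has global dimension $d$ and is Auslander regular.

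\textbf{Step 3 (from $\mathrm{gr}$ to $\drhoh$, with the sharp bound).} By Bj\"ork's theorem on filtered rings, $\gr\,\drhoh$ being Auslander regular forces $\drhoh$ to be Auslander--Gorenstein of finite global dimension $\le d+1$ (the naive bound: $\gr(K)\cong k[\bar\pi,\bar\pi^{-1}]$ contributes a spurious extra dimension). To sharpen it to $\le d$, use that $A^0$ is $\pi$-adically complete with $\pi$ central and a non-zero-divisor and $A^0/\pi A^0$ regular of global dimension $d$: given a finitely generated $\drhoh$-module $M$, pick a finitely generated $A^0$-lattice $M^0$ and kill its $\pi$-torsion so that $M^0$ becomes $\pi$-torsion-free (possible as $A^0$ is noetherian). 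All syzygies of $M^0$ over $A^0$ are then $\mathcal{O}_K$-flat, so reduction mod $\pi$ identifies the $d$-th syzygy $\Omega$ of $M^0$ with the $d$-th syzygy of $M^0/\pi M^0$ over $A^0/\pi A^0$, which is projective; since $\mathrm{Tor}_1^{A^0}(A^0/\pi A^0,\Omega)=0$ and $A^0$ is $\pi$-adically complete, the idempotent-lifting criterion shows $\Omega$ is projective over $A^0$. Hence $\mathrm{pd}_{A^0}M^0\le d$, and inverting $\pi$ yields $\mathrm{pd}_{\drhoh}M\le d$, so $\mathrm{gl.dim}\,\drhoh\le d$. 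Combined with the Auslander--Gorenstein property this makes $\drhoh$ Auslander regular, and Step 1 promotes everything to $\drho$.

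\textbf{Expected obstacle.} The genuinely hard part is Step 2: the norm estimates that make the symbols of the $b_i$ commute and be algebraically independent, so that $\gr\,\drhoh$ is a polynomial ring --- precisely what fails outside the range $r\ge1/p$. The more delicate homological point is the passage in Step 3 from $d+1$ down to $d$; this is also the phenomenon responsible for $\drho$ having the same homological dimension as its "smooth" reduction even when $G$ contains elements of order $p$, whereas $\filg$ itself need not --- a distinction that is exploited later in the paper.
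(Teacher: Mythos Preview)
The paper does not prove this statement --- its entire argument is the citation ``This is Theorem~8.9 in \cite{S1}'' --- so there is nothing to compare against; you are reconstructing the Schneider--Teitelbaum proof. Your Step~1 (crossed-product reduction via Maschke) and the computation $\gr\drhoh\cong(\gr K)[x_1,\dots,x_d]$ in Step~2 are correct, and already yield Auslander regularity with the bound $\le d+1$.

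The gap is in sharpening to $\le d$ via $A^0/\pi A^0\cong k[X_1,\dots,X_d]$. You say this is where $\sqrt[t]{p}\in K$ enters, but Theorem~\ref{dglob} carries no such hypothesis (the paper imposes it only later, at the start of Section~\ref{fil}), and without it the isomorphism fails. Take $K=\mathbb{Q}_p$, $H=\mathbb{Z}_p$, $r=p^{-1/2}$: then $\|b\|_r=p^{-1/2}$ so $b\notin pA^0$, while $\|b^2/p\|_r=1$ so $b^2\in pA^0$; hence $\bar b$ is a nonzero nilpotent in $A^0/pA^0\cong \mathbb{F}_p[u_1,u_2]/(u_1^2)$ (with $u_1=\bar b$, $u_2=\overline{b^2/p}$), a ring of infinite global dimension, and your Step~3 syzygy lift collapses. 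The repair is easy: enlarge $K$ to $K'=K(\sqrt[t]{p})$ so that $r\in|K'^{\times}|$, run your argument for $D_r(H,K')\cong D_r(H,K)\otimes_K K'$, and descend Auslander regularity and the bound $\le d$ along the faithfully flat finite extension $K\hookrightarrow K'$.
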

\begin{proof}This is Theorem $8.9$ in \cite{S1}.
\end{proof}
\begin{rem} \textnormal{This result is valid in greater generality, namely when $G$ is locally $L$-analytic, where $L$ is a finite extention of $\mathbb{Q}_p$. For details see \cite{Sm}}.
\end{rem}
 \label{cr}\subsection{Group rings and generalizations} \noindent In this section following \cite{CM} (Part I. Chapter 5.) we briefly recall the basic notions of group rings and their generalizations and show some properties of distribution algebras and the rings connected to them that we will use later.
It is well known that given a ring $R$ and a group $G$ the group ring $R[G]$ is defined to be a free right $R$-module with elements of $G$ as a basis and with multiplication given by $(gr)(hs) = (gh)(rs)$ together with bilinearity. We could have defined it by some universal property, for details see \cite {CM}. 
\newline \noindent If we allow the group to have some action on the ring of scalars, more precisely, let $R$ be a ring, $G$ a group and $\varphi$ a homomorphism $\varphi:G \rightarrow \textnormal{Aut}R$. Let us denote the image of $r \in R$ under $\varphi(g)$ by $r^g$.  The skew group ring $R\#G$ is defined to be the free right $R$-module with elements of $G$ as a basis as before but the multiplication is defined by $(gr)(hs)=(gh)(r^gs)$. Evidently the skew group ring contains $G$ as a subgroup in its group of units, and $R$ as a subgring. If we put $\varphi(g) = 1$ we get back the usual group ring structure. \newline \noindent We need a more generalized notion which is called the crossed product. 
\begin{df} \textnormal{Let $R$ be a ring and $G$ a group. Let $S$ be a ring containing $R$  and a set of units $\overline{G} = \{\overline{g} | g \in G\}$ isomorphic as a set to $G$ such that \newline \noindent (i) $S$ is a free right $R$-module with basis $\overline{G}$ and $\overline{1}_G = 1_S$, \newline \noindent (ii) for all $g_1,g_2 \in G$ $\overline{g_1}R = R\overline{g_1}$ and $\overline{g_1} \ \overline{g_2}R=\overline{g_1g_2}R$
\newline \noindent Then $S$ is called a crossed product and we denote  such a ring by $R*G$.}
\end{df}
\subsection{ Completed group algebras}
\label{iwa} \noindent  Let $G$ be a compact $\mathbb{Q}_p$-analytic group, $H$ its maximal open uniform subgroup and let $K$ be any finite extention of $\mathbb{Q}_p$ with ring of integers $\mathcal{O}_K$; a finite extention of $\mathbb{Z}_p$. Fix a uniformizer element $\pi$ of $\mathcal{O}_K$ and let $k$ be the residue field of $\mathcal{O}_K$. We will denote by $\mathcal{O}_K[[G]]$ the completed group algebra of $G$ with coefficients in $\mathcal{O}_K$: \[ \mathcal{O}_K[[G]] = \varprojlim \mathcal{O}_K[G/N]\] where $N$ runs over all the open normal subgroups of $G$. Similarly one can define \[k[[G]] = \varprojlim k[G/H] \] If $G$ is finite, these are just the usualy group algebra. If $K = \mathbb{Q}_p$ then the first one is called the Iwasawa algebra of $G$. \noindent Many ring theoretic properties of completed group algebras are known, the following result is due to Brumer (\cite{BR} Theorem $4.1$) who computed the global dimension of the completed group algebra of an arbitrary profinite group $G$ with coefficients in a pseudo-compact ring $R$. As a consequence of his work, we have 
\begin{thm} \label{okok} \textnormal{ Let be a compact $p$-adic analytic group of dimension $d$. Then both $k[[G]]$ and $\mathcal{O}_K[[G]]$ have finite global dimension if and only if $G$ has no element of order $p$, and in this case \[ \textnormal{gld}(\mathcal{O}_K[[G]]) = d+1 \ \ \ \ \ \ \ \ \ \ \textnormal{gld}(k[[G]]) = d\]}
\end{thm}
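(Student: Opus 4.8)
The plan is to obtain this statement as a consequence of Brumer's computation of the global dimension of completed group algebras, together with the determination of the $p$-cohomological dimension of a compact $p$-adic analytic group. Recall that for a profinite group $G$ and a pseudo-compact local coefficient ring $R$ whose residue field has characteristic $p$, Theorem $4.1$ of \cite{BR} gives
\[ \textnormal{gld}(R[[G]]) = \textnormal{gld}(R) + \textnormal{cd}_p(G), \]
an equality in $\mathbb{Z}_{\geq 0} \cup \{\infty\}$, where $\textnormal{cd}_p(G)$ denotes the continuous cohomological dimension of $G$ with $\mathbb{F}_p$-coefficients. First I would check that our two coefficient rings are covered: $k$ is a finite field, hence pseudo-compact, local, of residue characteristic $p$, with $\textnormal{gld}(k) = 0$; and $\mathcal{O}_K$ is a complete discrete valuation ring with finite residue field, hence pseudo-compact, local, of residue characteristic $p$, with $\textnormal{gld}(\mathcal{O}_K) = 1$. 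Hence $\textnormal{gld}(k[[G]]) = \textnormal{cd}_p(G)$ and $\textnormal{gld}(\mathcal{O}_K[[G]]) = 1 + \textnormal{cd}_p(G)$, and the theorem reduces to the assertion that $\textnormal{cd}_p(G) < \infty$ if and only if $G$ has no element of order $p$, in which case $\textnormal{cd}_p(G) = d$.

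For this reduction I would argue as follows. If $G$ has an element of order $p$, it contains a closed subgroup isomorphic to $\mathbb{Z}/p\mathbb{Z}$; since $H^n(\mathbb{Z}/p\mathbb{Z}, \mathbb{F}_p) \neq 0$ for all $n \geq 0$ and $\textnormal{cd}_p$ cannot increase under passage to a closed subgroup, this forces $\textnormal{cd}_p(G) = \infty$, so both completed group algebras have infinite global dimension. Conversely, assume $G$ has no element of order $p$, and let $H \leq G$ be an open uniform normal subgroup, which exists as recalled in Section \ref{dro}; then $\dim H = d$ and $H$ is a torsion-free pro-$p$ group, so by Lazard's theorem $H$ is a Poincar\'e group of dimension $d$, and in particular $\textnormal{cd}_p(H) = d$. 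Since $G$ has no $p$-torsion, Serre's theorem on the cohomological dimension of open subgroups applies and gives $\textnormal{cd}_p(G) = \textnormal{cd}_p(H) = d$. Feeding this back into the formulas above yields $\textnormal{gld}(k[[G]]) = d$ and $\textnormal{gld}(\mathcal{O}_K[[G]]) = d + 1$.

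The step requiring the most care is the interface between the two torsion hypotheses: Lazard's theorem is stated for torsion-free pro-$p$ groups, whereas the assumption here only rules out elements of order exactly $p$ and permits prime-to-$p$ torsion in $G$; the device that bridges the two is precisely Serre's theorem, which is available exactly because $G$ is free of $p$-torsion, allowing one to pass from $\textnormal{cd}_p$ of the open uniform subgroup $H$ to $\textnormal{cd}_p(G)$. A minor additional point is to record that Brumer's bound is an equality in the case at hand: the matching lower bounds are $\textnormal{gld}(k[[G]]) \geq \textnormal{cd}_p(G)$, coming from the identification $\textnormal{Ext}^{\bullet}_{k[[G]]}(k,k) \cong H^{\bullet}(G,\mathbb{F}_p)$, and $\textnormal{gld}(\mathcal{O}_K[[G]]) \geq \textnormal{gld}(k[[G]]) + 1$, coming from the change-of-rings relation for the central non-zero-divisor $\pi$ with $\mathcal{O}_K[[G]]/\pi\mathcal{O}_K[[G]] \cong k[[G]]$.
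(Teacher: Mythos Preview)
Your proposal is correct and is in fact more detailed than what the paper does: the paper gives no self-contained argument but simply records the statement as ``a consequence of [Brumer's] work'' (\cite{BR}, Theorem~4.1), without spelling out the computation of $\textnormal{cd}_p(G)$. Your derivation via Brumer's formula $\textnormal{gld}(R[[G]]) = \textnormal{gld}(R) + \textnormal{cd}_p(G)$, combined with Lazard's identification $\textnormal{cd}_p(H)=d$ for an open uniform subgroup and Serre's theorem to pass from $H$ to $G$ in the absence of $p$-torsion, is exactly the standard way to unpack that citation, so the approach is the same in spirit and you have supplied the missing details.
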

\bigskip \noindent The localization of $\mathcal{O}_K[[G]]$ at $S=\{1,\pi, \pi^2, \dots\}$: \[ K[[G]] = K \otimes_{\mathcal{O}_K} \mathcal{O}_K[[G]] \]  The authors in \cite{S1} call the ring $K[[G]]$  the Iwasawa algebra of "measures", and it is dual to the $K$-valued continuous functions on $G$. Throughout the paper we will call it the algebra of continuous distributions.
\newline \noindent Fix a representing system of elements of the cosets in $G/H$, $g_1,\dots,g_n$. Let us consider the algebra $\drho$. The left action of $G/H$ via the dirac distributions of a fixed representation system of the cosets is induced by the action on the $b_i = h_i - 1$ which is the following: $g_j b_i = (g_j b_i g_j^{-1})g_j$. This gives us a homomorphism $\varphi :G \rightarrow \textnormal{Aut}(\drhoh)$ since the conjugation is a group automorhism of $H$ and the action is trivial on the elements of $K$. Moreover $\drho$ is a free right $\drhoh$-module on basis $\delta_1,\dots,\delta_{g_n}$. The only problem is that the map $\psi : G/H \rightarrow \drho$ of the representing system into the group of units of $\drho$ is just a morphism of sets. So $\drho = \drhoh* G/H$ just like in the case of Iwasawa algebras.
\label{f}\subsection{Filtration and grading}\noindent First recall the definition of a filtration on a ring and its associated graded ring.
\begin{df}\textnormal{Let $R$ be an associative unital ring. We say that $R$ is filtered if it is equiped with a family of additive subgroups of $R$, $(\f^iR)_{i \in \mathbb{R}}$, such that 
\begin{itemize}
\item{$\f^i \subseteq \f^j$ if $ j \leq i$,}
\item{$\f^i \cdot \f^j \subseteq \f^{i+j}$,}
\item {$1 \in \f^0,$ }
\item{$\cup_{i \in \mathbb{R}} \f^i = R$} 
\end{itemize}
We say that a filtration is complete if $R \simeq \varprojlim R/\f^i$.}
\end{df}
\begin{rem}
\textnormal{Most of the books define a filtration to be an increasing filtartion. We defined it to be descending because our filtration on $\drho$ will be such, but it produces no difficulty. \noindent One can similarly talk about $\mathbb{N}, \mathbb{Z}$ filtrations depending on the index set. For more details on filtrations generaly see e.g. \cite{Hiu}.}
\end{rem}
 \noindent Let us recall one more fact about $\mathbb{N}$-filtered rings.
\begin{pro}
\textnormal{Let $R$ be a filtered ring with increasing $\mathbb{N}$-filtration. Then: \newline \noindent (i) rgld $R \leq$ rgld $\gr(R)$, \newline \noindent (ii) if $M$ is a filtered $R$-module, then pd $M \leq \textnormal{pd}_{\gr(R)}\gr(M)$ }
\end{pro}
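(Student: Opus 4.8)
Here is how I would approach the final proposition.

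The plan is to deduce (i) from (ii) and to prove (ii) by an induction that reduces matters to the case where $\gr(M)$ is already projective as a $\gr(R)$-module. For (i), recall that by a theorem of Auslander the right global dimension of $R$ equals $\sup_I \mathrm{pd}\,(R/I)$, the supremum taken over right ideals $I$. Equip $R/I$ with the quotient filtration $\f^i(R/I):=(\f^i R+I)/I$; this is again an increasing $\mathbb{N}$-filtration making $R/I$ a filtered $R$-module, and $\gr(R/I)$ is a (cyclic) graded $\gr(R)$-module, so $\mathrm{pd}_{\gr(R)}\gr(R/I)\le\mathrm{rgld}\,\gr(R)$. Then (ii) yields $\mathrm{pd}\,(R/I)\le\mathrm{rgld}\,\gr(R)$ for every $I$, whence $\mathrm{rgld}\,R\le\mathrm{rgld}\,\gr(R)$.

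For (ii) I would argue by induction on $n:=\mathrm{pd}_{\gr(R)}\gr(M)$, which may be assumed finite, the filtration of $M$ being a good one (exhaustive, $\mathbb{N}$-bounded below, with $\gr(M)$ finitely generated over $\gr(R)$, which in our applications is Noetherian). Lifting a homogeneous generating set of $\gr(M)$ to the filtered level — permissible because the filtration is bounded below, this being a form of graded Nakayama — produces a filtered free $R$-module $L$ together with a \emph{strict} surjection $L\to M$. Writing $K:=\ker(L\to M)$ with the induced (again good) filtration, strictness makes $0\to\gr(K)\to\gr(L)\to\gr(M)\to 0$ exact with $\gr(L)$ free, so $\mathrm{pd}_{\gr(R)}\gr(K)=n-1$ when $n\ge 1$; by induction $\mathrm{pd}_R K\le n-1$, whence $\mathrm{pd}_R M\le n$. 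Thus everything rests on the base case $n=0$: the key lemma that a filtered $R$-module $N$ with good ($\mathbb{N}$-bounded-below, exhaustive) filtration such that $\gr(N)$ is projective over $\gr(R)$ is itself projective over $R$.

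To prove the key lemma, choose a filtered free $L$ and a strict surjection $p\colon L\to N$, with kernel $K_0$; then $0\to\gr(K_0)\to\gr(L)\xrightarrow{\gr(p)}\gr(N)\to 0$ is exact and, since $\gr(N)$ is projective, splits through a degree-$0$ section $\sigma\colon\gr(N)\to\gr(L)$. Because $N$ has a good filtration, the canonical map $\gr\,\mathrm{Hom}_R(N,L)\to\mathrm{Hom}_{\gr(R)}(\gr(N),\gr(L))$ is surjective, so $\sigma$ lifts to a filtered $R$-linear $\tilde\sigma\colon N\to L$. Then $p\tilde\sigma\colon N\to N$ satisfies $\gr(p\tilde\sigma)=\mathrm{id}$, so $p\tilde\sigma-\mathrm{id}$ strictly lowers the filtration; since the filtration is $\mathbb{N}$-bounded below, $p\tilde\sigma-\mathrm{id}$ is locally nilpotent, $p\tilde\sigma=\mathrm{id}+(p\tilde\sigma-\mathrm{id})$ is an $R$-automorphism of $N$, and $\tilde\sigma\circ(p\tilde\sigma)^{-1}$ is an $R$-linear section of $p$; hence $N$ is a direct summand of the free module $L$. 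An alternative route passes to the Rees ring $\widetilde R:=\bigoplus_{i\ge 0}\f^i R$, a positively graded ring with a central homogeneous nonzerodivisor $t$ of degree $1$ satisfying $\widetilde R/t\widetilde R\cong\gr(R)$ and $\widetilde R/(1-t)\widetilde R\cong R$, shows by graded Nakayama and a change-of-rings computation that the Rees module $\bigoplus_{i\ge 0}\f^i N$ is graded projective over $\widetilde R$, and recovers $N$ by reduction modulo $1-t$.

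The main obstacle is precisely this key lemma — concretely, lifting the graded splitting $\sigma$ to an honest filtered $R$-homomorphism and controlling the resulting error term. Both steps use in an essential way that the filtration is $\mathbb{N}$-bounded below (and, for the lifting, that $N$ has a good filtration over a Noetherian $\gr(R)$); without such a boundedness hypothesis the statement fails. Accordingly one must check throughout the induction that the induced filtrations on $M$ and on the successive syzygy modules $K$ remain good and $\mathbb{N}$-bounded below.
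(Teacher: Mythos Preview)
The paper does not prove this proposition at all: it simply cites \cite{CM}, Theorem 7.6.17 and Corollary 7.6.18. Your outline is precisely the classical argument that sits behind that citation --- induction on $\mathrm{pd}_{\gr R}\gr M$ for (ii), reducing to the key lemma ``$\gr M$ projective $\Rightarrow$ $M$ projective'', and then Auslander's formula for (i) --- so in substance you and the paper agree.

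One point deserves care. In your proof of the key lemma you assert that, because $N$ has a good filtration, the canonical map $\gr\,\mathrm{Hom}_R(N,L)\to\mathrm{Hom}_{\gr R}(\gr N,\gr L)$ is surjective, and use this to lift the graded section $\sigma$. That surjectivity is \emph{not} a general fact about good filtrations; lifting an arbitrary graded map out of $\gr N$ to a filtered map out of $N$ is essentially equivalent to already knowing that $N$ is a filtered direct summand of a filtered-free module, which is exactly what you are trying to establish. (Maps \emph{into} $N$ from a filtered-free $L$ lift easily, since one only has to choose images of a basis; maps \emph{out of} $N$ do not.) Your Rees-ring alternative genuinely avoids this circularity and is the cleaner route. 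An equally standard workaround, closer to the presentation in \cite{CM}, is to first prove the easier statement ``$\gr M$ graded free $\Rightarrow$ $M$ filtered free'' by lifting a homogeneous basis, and then deduce the projective case by an Eilenberg swindle: choose a graded complement so that $\gr N\oplus Q$ is graded free, realise $Q$ inside $\gr L'$ for a suitable filtered-free $L'$, and apply the free case to $N\oplus L'$. With either fix your argument goes through, and the remaining steps (strictness of $p$, the ``$p\tilde\sigma-\mathrm{id}$ lowers filtration hence is locally nilpotent'' trick, and the bookkeeping that good, $\mathbb{N}$-bounded-below filtrations pass to syzygies) are correct as stated.
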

\begin{proof} See \cite{CM} Theorem $7.6.17.$ and Corollary $7.6.18.$
\end{proof}
\noindent By Lemma \ref{norm} we can define a filtration on $D_r(G,K)$ (induced by the norm) the same way as in the uniform case: for any $s \in \mathbb{R}$ \[\textnormal{Fil}^s(D_r(G,K)) : = \{ \mu \in D_r(G,K) : q_r(\mu) \leq p^{-s}\}\] \[\textnormal{Fil}^{s+}(D_r(G,K)) : = \{ \mu \in D_r(G,K) : q_r(\mu) < p^{-s}\}\]
 For any $s \in \mathbb{R}$ define \[\gr(D_r(G,K)) := \oplus_s \f^s D_r(G,K)/\f^{+s}D_r(G,K) \] The ring $\gr(R)$ is called the associated graded ring of the filtered ring $R$.
When $G$ is uniform, it was shown in \cite{S1} Theorem $4.5$ that the associated graded ring of $\drho$ is isomorphic to $k[x_0,x_0^{-1}][x_1,\dots,x_d]$. The isomorphism is induced by the mapping $\sigma(b_i) \mapsto x_i$ where $\sigma(b_i)$ are the principal symbols of $b_i$. The graded ring of $K$ is just the Laurent polynomial ring $k[x_0,x_0^{-1}]$ and the variable $x_0$ is the image of the principal symbol of the uniformizer element. The associated graded ring can be computed in the general case:
\begin{lem}\textnormal{Let $G$ be a $\mathbb{Q}_p$ analytic group and $r \in p^{\mathbb{Q}}$. Then $\gr(D_r(G,K))$ is isomorphic to the skew group ring $ \gr(\drhoh)\#G/H$.}
\end{lem}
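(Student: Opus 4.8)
The plan is to construct an explicit isomorphism out of the crossed-product decomposition $\drho=\bigoplus_{i=1}^n\drhoh\,g_i$ (with $g_i=\delta_{g_i}$, and we may as well take $g_1=e$) together with the very definition of the maximum norm, $q_r\bigl(\sum_i\lambda_ig_i\bigr)=\max_i\|\lambda_i\|_r$. First I would record that $q_r(g_i)=\|1\|_r=1$, so $g_i\in\f^0(\drho)\setminus\f^{0+}(\drho)$ and $\sigma(g_i)$ is a nonzero homogeneous element of degree $0$; the same holds for $\sigma(h)$ with $h\in H$. From the formula for $q_r$ one has $\f^s(\drho)=\bigoplus_i\f^s(\drhoh)\,g_i$ and likewise for $\f^{s+}$, for every $s\in\mathbb{R}$, and there is no drop in filtration degree in a product $\lambda g_i$ ($\lambda\in\drhoh$) since $q_r(\lambda g_i)=\|\lambda\|_r=q_r(\lambda)q_r(g_i)$. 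Passing to associated graded therefore yields
\[
\gr(\drho)=\bigoplus_{i=1}^n\gr(\drhoh)\,\sigma(g_i),
\]
a free right $\gr(\drhoh)$-module of rank $n$ with homogeneous basis $\sigma(g_1)=1,\sigma(g_2),\dots,\sigma(g_n)$, all in degree $0$.

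The technical key is the elementary fact that $\|h-1\|_r<1$ for every $h\in H$. Writing $h=h_1^{x_1}\cdots h_d^{x_d}$ with $x_i\in\mathbb{Z}_p$, one has $h_i^{x_i}-1=\sum_{k\ge1}\binom{x_i}{k}b_i^k$, so $\|h_i^{x_i}-1\|_r=\sup_{k\ge1}\bigl|\binom{x_i}{k}\bigr|\,r^k\le r<1$; expanding $h-1=\prod_i\bigl(1+(h_i^{x_i}-1)\bigr)-1$ and using the ultrametric inequality gives $\|h-1\|_r\le r<1$. Hence $\sigma(h)=\sigma(1)=1$ in $\grr(\drhoh)$ for every $h\in H$, and, $\|\cdot\|_r$ being multiplicative on the uniform group $H$, also $\sigma(h^{-1})=1$.

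Now I would assemble the isomorphism. Conjugation $\lambda\mapsto g_i\lambda g_i^{-1}$ preserves $\|\cdot\|_r$ on $\drhoh$ (this is exactly the computation inside the proof of Proposition \ref{norm}), hence it is filtration-preserving and induces a graded ring automorphism $\bar\varphi(g_i)$ of $\gr(\drhoh)$; conjugation by an element $h\in H$ is conjugation by a unit with symbol $1$ and therefore induces the identity on $\gr(\drhoh)$. Since $g_ig_j=h_{i,j}g_k$ with $h_{i,j}\in H$, it follows that $\bar\varphi(g_i)\bar\varphi(g_j)=\bar\varphi(g_k)$, so $\bar\varphi$ descends to a genuine group homomorphism $G/H\to\textnormal{Aut}(\gr(\drhoh))$; this is the action used to form the skew group ring $\gr(\drhoh)\#G/H$. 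Furthermore, $h_{i,j}g_k$ again involves no degree drop and $\sigma(h_{i,j})=1$, so $\sigma(g_i)\sigma(g_j)=\sigma(g_ig_j)=\sigma(h_{i,j})\sigma(g_k)=\sigma(g_k)$: the symbols of the coset representatives multiply exactly as the cosets of $G/H$, with no $2$-cocycle twist. Likewise $\sigma(g_i)\mu'=\bar\varphi(g_i)(\mu')\,\sigma(g_i)$ for $\mu'\in\gr(\drhoh)$, obtained from $g_i\lambda=(g_i\lambda g_i^{-1})g_i$ by taking symbols (no degree drop). These are precisely the defining relations of $\gr(\drhoh)\#G/H$, so the right $\gr(\drhoh)$-linear map sending the basis element $\overline{g_i}$ to $\sigma(g_i)$ and restricting to the identity on $\gr(\drhoh)$ is a ring homomorphism; by the displayed decomposition it carries a basis to a basis and is hence an isomorphism.

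The only genuine obstacle here is the norm bookkeeping: that $\gr$ commutes with the finite direct sum $\bigoplus_i\drhoh g_i$, and that each of the products $\lambda g_i$, $g_ig_j$, $h_{i,j}g_k$ and $g_i\lambda$ has no drop in filtration degree. All of these reduce to the identities $q_r(\mu g_i)=q_r(\mu)$, $q_r(g_i\lambda)=q_r(\lambda)$ and $\sigma(h)=1$ for $h\in H$; none of it is deep, but it is precisely what makes the naive assignment $\overline{g_i}\mapsto\sigma(g_i)$ respect the two ring structures.
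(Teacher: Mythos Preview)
Your proof is correct and follows the same line as the paper's: the crux in both is the estimate $\|h-1\|_r<1$ for $h\in H$, which forces $\sigma(h)=1$ and hence collapses the crossed-product $2$-cocycle so that the $\sigma(g_i)$ multiply exactly as the cosets in $G/H$. You have simply carried out in full the bookkeeping (the direct-sum decomposition of each $\f^s$, the absence of degree drops, and the descent of the conjugation action to a genuine homomorphism $G/H\to\textnormal{Aut}(\gr(\drhoh))$) that the paper leaves implicit.
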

\begin{proof} Indeed, in the distribution algebra, the problem is that when we multiply two elements $g_i,g_j$ of the representing system of cosets in $G/H$, $g_ig_j$ not necessarily a representing element, it is in  $G$ and of the form $g_ig_jh$ for some $h \in H$. We show that it is not a problem any more in the associated graded ring \[||g_ig_jh - g_ig_j||_\rho = || (g_ig_j)(h-1)||_\rho = ||g_i||_\rho||g_j||_\rho||h-1||_\rho\] If $h=h_i$ then using that $G$ is a saturated $p$-valued group with valuation $\omega$ we see that $||h-1||_\rho \leq \rho^{\omega(h_i)} < 1$ and if $h = h_1^{\alpha_1}\cdot \cdot \cdot h_d^{\alpha_d}$, $\sum \alpha_i \leq 2$ then by the multiplicity of the norm shows that the norm of $(h-1)$ is still $<1$. So $\||g_ig_jh - g_ig_j||_\rho <1$ which shows that in the associated graded ring all the elements in one particular coset are mapped to the same principal symbol. 
\end{proof}
\noindent So in particular \[ \filg = \bigoplus  \f^0(\drhoh)g_i\] and \[ \gr(\drho) = \bigoplus \gr(\drhoh)g_i\] 
\label{k0} \subsection{The Grothendieck group of rings and categories} \noindent  We recall the definitions and results we use in the  proofs.
\begin{df}\textnormal{ Let $\mathcal{A}$ be an Abelian-category. Its Grothendieck group $K_0(\mathcal{A})$ is the abelian group having one generator $[A]$ for each object in $\mathcal{A}$ and a realtion $[A] = [A_1]+[A_2]$ for every short exact sequence \[0 \rightarrow A_1 \rightarrow A \rightarrow A_2 \rightarrow 0 \] in $\mathcal{A}$ }
\end{df}
\begin{rem}\textnormal{ We get back the classical definition of $K_0$ of a noetherian ring $R$ (for details see e.g. \cite{B} Chapter II.) if we consider $\mathcal{A}$ to be the full subcategory of finitely generated projecive modules. If we consider the category of finitely generated modules over  R, we denote its Grothendieck group by $G_0(R)$ and by $K_0(R)$ when we only consider the finitely generated projectives over $R$ (i.e. the classical $K_0$ of a ring)}
\end{rem}
\begin{pro}\label{idem}\textnormal{Let $R$ be a ring and $I$ a nilpotent, or more generally a complete ideal in $R$ (i.e. $R$ is an $I$-adic ring). Then \[K_0(R/I) \simeq K_0(R)\]}
\end{pro}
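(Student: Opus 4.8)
The plan is to show that the ring surjection $\pi\colon R \to R/I$ induces an isomorphism on $K_0$ by reducing everything to the lifting of idempotents. Recall that $K_0(R)$ depends only on the additive monoid of isomorphism classes of finitely generated projective $R$-modules (every short exact sequence of projectives splits, so $K_0$ of that category is the group completion of $(\mathrm{Proj}(R),\oplus)$), and that every finitely generated projective is isomorphic to $eR^n$ for an idempotent matrix $e \in M_n(R)$, with $eR^n \cong fR^m$ precisely when, after stabilising to a common size by adjoining zero blocks, $e$ and $f$ become conjugate by a unit of the matrix ring. Base change $P \mapsto P/IP = (R/I)\otimes_R P$ carries $eR^n$ to $\bar e\,(R/I)^n$ and so defines a monoid homomorphism $\mathrm{Proj}(R) \to \mathrm{Proj}(R/I)$; I want to prove this is a bijection on isomorphism classes, which after group completion gives $K_0(R)\xrightarrow{\ \sim\ }K_0(R/I)$.

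The key input is the \textbf{idempotent lifting lemma}: if $I\subseteq R$ is nilpotent, or more generally if $R$ is $I$-adically complete (so $R=\varprojlim R/I^k$), then (i) every idempotent of $R/I$ is the image of an idempotent of $R$, and (ii) any two idempotents of $R$ that become conjugate in $R/I$ are already conjugate in $R$. First note that every element of $1+I$ is a unit: in the nilpotent case by a finite geometric series, in the complete case because $\sum_{k\ge 0}(-x)^k$ converges $I$-adically for $x\in I$. For (i) in the nilpotent case one refines a set-theoretic lift $a$ of $\bar e$ (with $a^2-a\in I$) by a standard closed polynomial formula in $a$; in the complete case one lifts $\bar e$ successively modulo $I,I^2,I^3,\dots$, each time correcting by an element of the previous power of $I$, and the partial lifts converge by completeness to an idempotent lifting $\bar e$. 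For (ii), a lift of the conjugating unit in $R/I$ is a unit of $R$ by the remark above, and the conjugation it induces agrees with the given one up to a factor in $1+I$, which can be absorbed; alternatively (ii) follows from (i) applied to a $2\times 2$ matrix construction over $R$.

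To assemble the proof I apply the lemma to the matrix rings $M_n(R)$ with the ideal $M_n(I)$: this ideal is nilpotent when $I$ is, since $M_n(I)^k=M_n(I^k)$, and $M_n(R)$ is $M_n(I)$-adically complete when $R$ is $I$-adically complete (again using $M_n(I)^k=M_n(I^k)$ and that the inverse limit commutes with the finite product defining $M_n$), while in both cases $M_n(R)/M_n(I)\cong M_n(R/I)$. Hence at every level $n$ the induced map on idempotents is surjective up to conjugacy and injective on conjugacy classes, compatibly with the block-sum stabilisation maps $M_n\hookrightarrow M_{n+1}$. Passing to the colimit over $n$, the monoid homomorphism $\mathrm{Proj}(R)\to\mathrm{Proj}(R/I)$ is bijective: surjective because a finitely generated projective $\bar P\cong\bar e\,(R/I)^n$ is hit by $eR^n$ for any idempotent lift $e$ of $\bar e$; injective because if $eR^n$ and $fR^n$ have isomorphic reductions, then after stabilising $\bar e$ and $\bar f$ are conjugate in some $M_N(R/I)$, hence $e$ and $f$ are conjugate in $M_N(R)$, so $eR^n\cong fR^n$. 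Group completion then yields $K_0(R)\cong K_0(R/I)$.

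The nilpotent case is essentially formal once one recalls the idempotent lifting formula; the real work — and the main obstacle — is the $I$-adically complete case, where both the existence of the lift and the comparison of conjugating units require a successive-approximation argument and an appeal to completeness, together with the verification that $M_n(R)$ inherits $M_n(I)$-adic completeness. Since this is classical (see \cite{B}), in the write-up I would state the idempotent lifting lemma, sketch the two cases, and devote most of the space to the bookkeeping that transfers it from idempotents to $K_0$.
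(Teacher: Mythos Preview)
Your argument is correct and is precisely the standard proof via idempotent lifting in matrix rings; the paper does not give its own argument but simply cites \cite[Chapter~II, Lemma~2.2]{B}, whose content is exactly what you have written out. So your proposal matches the paper's approach, only with the cited proof unpacked in detail.
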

\begin{proof} It is Lemma $2.2.$ in \cite{B} Chapter II.
\end{proof}
\begin{thm}(Devissage Theorem) \textnormal{ Let $\mathcal{B} \subset \mathcal{A}$ small albelian categories. Suppose that \newline (i) $\mathcal{B}$ is an exact abelian subcategory of $\mathcal{A}$, closed in $\mathcal{A}$ under subobjects and quotients, \newline \noindent (ii) Every object A of $\mathcal{A}$ has a finite filtration \[A = A_0 \supset A_1 \supset \dots \supset A_n = 0\] with all quotients $A_i/A_{i+1}$ in $\mathcal{B}$. \newline \noindent Then the inclusion functor $\mathcal{B} \subset \mathcal{A}$ is exact and induces an isomorphism \[K_0(\mathcal{B}) \simeq K_0(\mathcal{A})\]
}
\end{thm}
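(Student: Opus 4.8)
The plan is to construct an explicit inverse to the map $i_*\colon K_0(\mathcal{B}) \to K_0(\mathcal{A})$ induced by the inclusion. First I would observe that $i_*$ makes sense: since $\mathcal{B}$ is an exact abelian subcategory of $\mathcal{A}$ closed under subobjects and quotients, a short exact sequence in $\mathcal{B}$ remains exact in $\mathcal{A}$, so the assignment $[B] \mapsto [B]$ respects the defining relations and descends to a group homomorphism. (Here smallness of the categories is what guarantees $K_0$ is an honest set.)

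Next I would define a candidate inverse $\theta\colon K_0(\mathcal{A}) \to K_0(\mathcal{B})$ on generators by
\[ \theta([A]) = \sum_{i=0}^{n-1} [A_i/A_{i+1}], \]
where $A = A_0 \supset A_1 \supset \dots \supset A_n = 0$ is any finite filtration with all subquotients in $\mathcal{B}$, which exists by hypothesis (ii). The main obstacle is to prove that $\theta([A])$ does not depend on the chosen filtration. For this I would invoke the Schreier refinement theorem, which holds in any abelian category because the lattice of subobjects of a fixed object is modular (equivalently, the Zassenhaus butterfly lemma): any two such filtrations of $A$ admit equivalent refinements. Inserting an intermediate subobject $A_i \supset A' \supset A_{i+1}$ produces the short exact sequence $0 \to A'/A_{i+1} \to A_i/A_{i+1} \to A_i/A' \to 0$, and closure of $\mathcal{B}$ under subobjects and quotients forces $A'/A_{i+1}$, $A_i/A'$ and $A_i/A_{i+1}$ all to lie in $\mathcal{B}$; the corresponding relation in $K_0(\mathcal{B})$ then shows the sum is unchanged under refinement, hence equal for any two filtrations. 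So $\theta$ is a well-defined function on objects.

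The remaining steps are routine. To see $\theta$ is additive on short exact sequences $0 \to A' \to A \to A'' \to 0$ of $\mathcal{A}$, I would fix filtrations of $A'$ and $A''$ with subquotients in $\mathcal{B}$, pull the filtration of $A''$ back along $A \twoheadrightarrow A''$, and splice it onto that of $A'$ to obtain a filtration of $A$ whose list of subquotients is precisely that of $A'$ followed by that of $A''$; hence $\theta([A]) = \theta([A']) + \theta([A''])$ and $\theta$ factors through $K_0(\mathcal{A})$. Finally I would check the two composites: for $B \in \mathcal{B}$ the trivial filtration $B \supset 0$ gives $\theta(i_*[B]) = [B]$, so $\theta \circ i_* = \mathrm{id}$; and for $A \in \mathcal{A}$ with filtration as above the short exact sequences $0 \to A_{i+1} \to A_i \to A_i/A_{i+1} \to 0$ telescope to $[A] = \sum_i [A_i/A_{i+1}] = i_*(\theta[A])$ in $K_0(\mathcal{A})$, so $i_* \circ \theta = \mathrm{id}$. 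Thus $i_*$ is an isomorphism with inverse $\theta$, and exactness of the inclusion functor has been used throughout; the only genuinely delicate point is the filtration-independence of $\theta$, which rests on modularity of the subobject lattice together with the stated closure properties of $\mathcal{B}$.
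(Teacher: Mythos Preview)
Your proof is correct and is essentially the standard argument for the D\'evissage Theorem. The paper itself does not give a proof at all: it simply cites Weibel's $K$-book (Chapter~II, Theorem~6.3), so your write-up is strictly more informative than what the paper provides, and in fact follows the same line of reasoning one finds in that reference.
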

\begin{proof} See \cite{B} Chapter II.,Theorem $6.3.$
\end{proof}
\noindent Let $\mathcal{A}$ be again an abelian category. We call a subcateory  $\mathcal{B} \subset \mathcal{A}$ Serre-subcategory if $0 \to A \to B \to C \to 0$ is an exact sequence in $\mathcal{A}$, then $B \in \mathcal{B}$ if and only if $A, C \in \mathcal{B}$. It is well known that if $\mathcal{B}$ is a Serre-subcategory. we can form a quotient category $\mathcal{A}/\mathcal{B}$. (For details, see e.g. \cite{B} page $51.$)
\begin{thm}(Localization theorem) \label{loc}\textnormal{ Let $\mathcal{A}$ be a small abelian category, and $\mathcal{B}$ a Serre subcategory of $\mathcal{A}$. Then the following seuence is exact: \[ K_0(\mathcal{B}) \to K_0(\mathcal{A}) \to K_0(\mathcal{A}/\mathcal{B}) \to 0\]
}
\end{thm}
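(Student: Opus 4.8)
The plan is to prove the two substantive assertions separately: surjectivity of the right-hand arrow and exactness in the middle. Write $i_{*}\colon K_{0}(\mathcal{B})\to K_{0}(\mathcal{A})$ and $q_{*}\colon K_{0}(\mathcal{A})\to K_{0}(\mathcal{A}/\mathcal{B})$ for the maps induced by the inclusion $\mathcal{B}\hookrightarrow\mathcal{A}$ and by the quotient functor $q\colon\mathcal{A}\to\mathcal{A}/\mathcal{B}$; both functors are exact, so both maps are well defined on Grothendieck groups. Surjectivity of $q_{*}$ is immediate because $q$ is the identity on objects, so every generator $[X]$ of $K_{0}(\mathcal{A}/\mathcal{B})$ equals $q_{*}[X]$. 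Similarly $q_{*}\circ i_{*}=0$, since every object of $\mathcal{B}$ becomes a zero object in $\mathcal{A}/\mathcal{B}$; hence $\operatorname{im}(i_{*})\subseteq\ker(q_{*})$ and $q_{*}$ factors through a surjection $\bar q_{*}\colon K_{0}(\mathcal{A})/\operatorname{im}(i_{*})\twoheadrightarrow K_{0}(\mathcal{A}/\mathcal{B})$. The whole content of the theorem is that $\bar q_{*}$ is injective, i.e. that $\ker(q_{*})=\operatorname{im}(i_{*})$.

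To get injectivity I would construct a homomorphism $\theta\colon K_{0}(\mathcal{A}/\mathcal{B})\to K_{0}(\mathcal{A})/\operatorname{im}(i_{*})$ which is left inverse to $\bar q_{*}$; since $\bar q_{*}$ is already onto, the existence of such a $\theta$ forces it to be an isomorphism. On generators $\theta$ should send $[X]$, for $X$ an object of $\mathcal{A}/\mathcal{B}$ (equivalently, an object of $\mathcal{A}$), to the class of $[X]$ modulo $\operatorname{im}(i_{*})$; then $\theta\circ\bar q_{*}=\operatorname{id}$ by inspection on generators, and the only real task is to verify that $\theta$ respects the defining relations of $K_{0}(\mathcal{A}/\mathcal{B})$. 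Here I would invoke the standard structural description of the Serre quotient: $\operatorname{Hom}_{\mathcal{A}/\mathcal{B}}(X,Y)$ is the direct limit of the groups $\operatorname{Hom}_{\mathcal{A}}(X',Y/Y')$ over subobjects $X'\subseteq X$ with $X/X'\in\mathcal{B}$ and subobjects $Y'\subseteq Y$ with $Y'\in\mathcal{B}$, and a morphism becomes an isomorphism in $\mathcal{A}/\mathcal{B}$ exactly when a representing $\mathcal{A}$-morphism has kernel and cokernel in $\mathcal{B}$. Two consequences suffice. First, if $X\cong Y$ in $\mathcal{A}/\mathcal{B}$, choose a representing $f_{0}\colon X'\to Y/Y'$ with $X/X',Y',\ker f_{0},\operatorname{coker}f_{0}$ all in $\mathcal{B}$; the additivity identities $[X]=[X']+[X/X']$, $[X']=[\ker f_{0}]+[\operatorname{im}f_{0}]$, $[Y/Y']=[\operatorname{im}f_{0}]+[\operatorname{coker}f_{0}]$, $[Y]=[Y']+[Y/Y']$ in $K_{0}(\mathcal{A})$ then collapse modulo $\operatorname{im}(i_{*})$ to the relation $[X]\equiv[Y]$, so $\theta$ does not depend on the representative object. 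Second, every subobject of $q(Z)$ in $\mathcal{A}/\mathcal{B}$ has the form $q(Z_{1})$ for some honest subobject $Z_{1}\subseteq Z$ in $\mathcal{A}$; consequently any short exact sequence $0\to Z'\to Z\to Z''\to0$ in $\mathcal{A}/\mathcal{B}$ is isomorphic, in $\mathcal{A}/\mathcal{B}$, to the image under $q$ of a genuine short exact sequence $0\to Z_{1}\to Z\to Z/Z_{1}\to0$ in $\mathcal{A}$, and then additivity of $[\cdot]$ in $K_{0}(\mathcal{A})$ combined with the first consequence yields $\theta[Z]=\theta[Z']+\theta[Z'']$. Hence $\theta$ is a well-defined homomorphism, and the proof is complete.

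The main obstacle is the input from the theory of the quotient category: the direct-limit description of $\operatorname{Hom}$ in $\mathcal{A}/\mathcal{B}$, the characterization of its isomorphisms, and the fact that subobjects of $q(Z)$ lift to subobjects of $Z$. These are precisely the facts that make the construction of $\mathcal{A}/\mathcal{B}$ work, and essentially all the care of the argument lives there; once they are available, the manipulations with $K_{0}$ above are routine relation-chasing. In practice I would quote the construction and elementary properties of the Serre quotient (for instance from \cite{B}) and only spell out the last paragraph explicitly.
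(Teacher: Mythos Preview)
Your argument is correct and is the standard proof of the localization theorem; the paper itself does not supply a proof at all but simply cites \cite{B}, Chapter~II, Theorem~6.4, and the argument found there is precisely the one you outline (construct an inverse $\theta$ to $\bar q_{*}$ and check well-definedness using the description of morphisms and subobjects in the Serre quotient). So you have written out what the paper only quotes.
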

\begin{proof} See \cite{B} Chapter II., Theorem $6.4.$
\end{proof}
\noindent We will use a special case of the previous result: Let $R$ be a noetherian  ring, $x \in R$ a central non-zero divisior and $S =\{1,x,x^2,\dots,\}$ the central multiplicative set. The subcategory of finitely generated $S$-torsion modules, denoted by $x$-tors is a Serre subcategory in ($\textnormal{mod}-R$), the category of finitely generated right $R$-modules. There is a natural equivalence between ($\textnormal{mod}-S^{-1}R)$ and the quotient category $(\textnormal{mod}-R)/(x-\textnormal{tors})$. Then by the Localization Theorem, we have the following exact sequence: \[ K_0(x-\textnormal{tors}) \to G_0(R) \to G_0(R[1/x]) \to 0 \]
\begin{thm}\label{fund}(Fundamental Theorem for $G_0$-theory of rings)\textnormal{ Let $R$ be a noetherian ring, the inclusions $R \hookrightarrow R[t] \hookrightarrow R[t^{-1},t]$ induce isomorphisms \[G_0(R) \simeq G_0(R[t]) \simeq G_0(R[t,t^{-1}])\]}
\end{thm}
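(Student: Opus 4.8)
\noindent The plan is to prove the two isomorphisms separately: the one involving the Laurent variable by a localization argument, and the one involving the polynomial variable by homotopy invariance of $G_0$. Since $R[t]$ is free, hence flat, over $R$ and $R[t,t^{-1}]$ is flat over $R[t]$, extension of scalars along the two inclusions is exact and therefore induces group homomorphisms $\alpha\colon G_0(R)\to G_0(R[t])$, $\alpha([M])=[R[t]\otimes_R M]$, and $\beta\colon G_0(R[t])\to G_0(R[t,t^{-1}])$, $\beta([N])=[R[t,t^{-1}]\otimes_{R[t]}N]$; I would show that each is an isomorphism.

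\noindent For $\beta$: the element $t$ is a central non-zero-divisor of $R[t]$ with $R[t][1/t]=R[t,t^{-1}]$, so the localization sequence recalled above (the special case of Theorem~\ref{loc}) reads
\[ K_0(t\textnormal{-tors}) \longrightarrow G_0(R[t]) \stackrel{\beta}{\longrightarrow} G_0(R[t,t^{-1}]) \longrightarrow 0 . \]
A finitely generated $t$-torsion $R[t]$-module is killed by a power of $t$, and its $t$-adic filtration has all subquotients killed by $t$, i.e.\ finitely generated over $R[t]/(t)=R$; hence the D\'evissage Theorem applies to the inclusion $\textnormal{mod-}R\subset t\textnormal{-tors}$ and gives $G_0(R)\cong K_0(t\textnormal{-tors})$. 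Under this identification the first map of the sequence sends $[M]$, for a finitely generated $R$-module $M$, to the class of $M$ viewed as an $R[t]$-module with $t$ acting by zero; the exact sequence of $R[t]$-modules
\[ 0 \longrightarrow R[t]\otimes_R M \stackrel{\cdot\, t}{\longrightarrow} R[t]\otimes_R M \longrightarrow M \longrightarrow 0 \]
(multiplication by $t$ merely shifts the natural grading $R[t]\otimes_R M=\bigoplus_{n\geq 0}Mt^n$) then shows that this class is $0$ in $G_0(R[t])$. So the first map is zero and $\beta$ is an isomorphism.

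\noindent For $\alpha$: I would first build a retraction. As $R=R[t]/(t)$ has the finite free $R[t]$-resolution $0\to R[t]\stackrel{\cdot\, t}{\to}R[t]\to R\to 0$, for any finitely generated $R[t]$-module $N$ only $\textnormal{Tor}^{R[t]}_{0}(R,N)=N/tN$ and $\textnormal{Tor}^{R[t]}_{1}(R,N)=\ker(t\colon N\to N)$ are nonzero, both finitely generated over $R$; by the long exact $\textnormal{Tor}$-sequence the rule $[N]\mapsto[N/tN]-[\ker(t\colon N\to N)]$ is additive on short exact sequences and thus defines $\rho\colon G_0(R[t])\to G_0(R)$. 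Since $R[t]\otimes_R M$ is $t$-torsion-free with $(R[t]\otimes_R M)/t(R[t]\otimes_R M)\cong M$, we get $\rho\alpha=\textnormal{id}$; hence $\alpha$ is split injective and only its surjectivity remains.

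\noindent The surjectivity of $\alpha$ is Grothendieck's homotopy invariance of $G_0$, and this is the step I expect to be the real obstacle. I would proceed by noetherian induction. As $R[t]$ is noetherian, every finitely generated module has a finite filtration with subquotients $R[t]/\mathfrak{P}$, $\mathfrak{P}$ prime, so it is enough to show each $N:=R[t]/\mathfrak{P}$ lies in the image of $\alpha$. Write $\mathfrak{p}=\mathfrak{P}\cap R$, $A=R/\mathfrak{p}$ (a domain), $\mathfrak{q}=\mathfrak{P}/\mathfrak{p}[t]\subset A[t]$. If $\mathfrak{q}=0$ then $N=R[t]\otimes_R A=\alpha([A])$ and we are done. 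If $\mathfrak{q}\neq 0$, then $\mathfrak{q}$ generates a nonzero — hence principal — prime of the principal ideal domain $\textnormal{Frac}(A)[t]$, so $A[t]/\mathfrak{q}$ is generically finite over $A$; thus there is $0\neq s\in R$, $s\notin\mathfrak{P}$, with $N[1/s]$ finitely generated and free over $A_s:=A[1/s]$, say $N[1/s]\cong A_s^{\,m}$ with $t$ acting by some $\phi\in M_m(A_s)$, and the exact sequence of $A_s[t]$-modules
\[ 0 \longrightarrow A_s[t]^{\,m} \xrightarrow{\, t\,\textnormal{id}-\phi\,} A_s[t]^{\,m} \longrightarrow N[1/s] \longrightarrow 0 \]
forces $[N[1/s]]=0$ in $G_0(A_s[t])$, hence in $G_0(R[t][1/s])$. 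Putting this vanishing into the localization sequence for the central element $s\in R[t]$, together with the D\'evissage identification $K_0(s\textnormal{-tors})\cong G_0((R/sR)[t])$, reduces the membership of $[N]$ in the image of $\alpha$ to the surjectivity of the analogous map for $R/sR$ — that is, to the same statement for a module of strictly smaller support. The genuinely delicate point, which is the heart of Grothendieck's theorem, is to arrange this induction so that it terminates (one must choose $s$, or iterate the construction, so that the relevant dimension really drops); for the details I would follow a standard reference such as \cite{B}. Granting this, $\alpha$ is surjective, and combining with the previous step we conclude $G_0(R)\cong G_0(R[t])\cong G_0(R[t,t^{-1}])$.
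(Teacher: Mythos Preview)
The paper does not actually prove this theorem: it is stated in the preliminaries (Section~\ref{k0}) as a known result, with the implicit reference being \cite{B}, and is only used later via the remark following it. So there is no ``paper's own proof'' to compare against.

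Your argument is the standard one (essentially Grothendieck--Serre, as presented e.g.\ in \cite{B}). The treatment of $\beta$ via the localization sequence at the central regular element $t$, together with D\'evissage and the shift sequence $0\to R[t]\otimes_R M\xrightarrow{\cdot t}R[t]\otimes_R M\to M\to 0$, is correct and complete. The construction of the retraction $\rho$ from the two-term resolution of $R$ over $R[t]$ is also correct and gives split injectivity of $\alpha$ cleanly.

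For surjectivity of $\alpha$ your outline is the right one, but the induction should be set up more carefully than you indicate. The usual formulation is noetherian induction on $R$ itself: assume the statement for every proper quotient $R/I$, and show every $[R[t]/\mathfrak{P}]$ lies in $\mathrm{im}\,\alpha$. In your reduction step you produce $0\neq s\in A=R/\mathfrak{p}$ with $[N[1/s]]=0$; the localization sequence for $s$ then pushes $[N]$ into the image of $G_0((R/sR)[t])$ \emph{provided} $s$ is not a unit in $R$. You should check the boundary case where $\mathfrak{p}$ is maximal (so $A$ is a field): then $A[t]/\mathfrak{q}$ is already finite over $A$ and your characteristic-polynomial sequence gives $[N]=0$ in $G_0(A[t])$, hence $[N]\in\mathrm{im}\,\alpha$ directly, with no further descent needed. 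With that case handled, the induction terminates because $R/sR$ is a proper quotient. Your instinct that this is ``the genuinely delicate point'' is right, but the difficulty is organizational rather than conceptual; once the induction is phrased over quotients of $R$ there is no further subtlety, and deferring to \cite{B} is entirely appropriate here since the paper itself does the same.
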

\begin{rem}\label{global} \textnormal{If $R$ is a noetherian ring with finite global dimension, there is another result called Fundamental Theorem for $K_0$ of regular rings. One of the  statements of the theorem is that if $R$ is a relguar noetherian ring then $G_0(R) \simeq K_0(R)$. For details see Theorem $7.8.$ in \cite{B} Chapter II.} 
\end{rem}
\section{$K_0$ of the 0th graded subring of $\gr \drho$} \label{3}
\noindent In this section we prove the following theorem and we will also make use of the technique of the proof later.
 \begin{thm}\label{grr}\textnormal
{The Grothendieck group of $\grr D_{\rho}(G,K)$ is isomorphic to $\mathbb{Z}^c.$
}
\end{thm}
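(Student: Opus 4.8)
The plan is to identify $\grr\drho$ with the skew group ring of a polynomial ring over $k$, and then to compute its Grothendieck group by reducing, via a grading, to the Grothendieck group of the finite-dimensional algebra $k[G/H]$, which is classical. Write $\Gamma=G/H$ and $d=\dim H=\dim G$. By the Lemma above, $\gr\drho\cong (\gr\drhoh)\#\Gamma$, and by Theorem $4.5$ of \cite{S1} we have $\gr\drhoh\cong k[x_0,x_0^{-1}][x_1,\dots,x_d]$ with $x_0=\sigma(\pi)$ and $x_i=\sigma(b_i)$. Writing $r=p^{s/t}$ with $t>0$, the assumption $\sqrt[t]{p}\in K$ forces $t$ to divide the ramification index of $K/\mathbb Q_p$, so that $\deg x_i/\deg x_0$ is a positive integer $\delta$; after rescaling degrees we may assume $\deg x_0=1$ and $\deg x_i=\delta$ for all $i$. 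Inspecting the monomials $x_0^{a_0}x^\alpha$ of total degree $0$ (so $a_0=-\delta|\alpha|$, with $\alpha\in\mathbb N^d$ arbitrary since $x_0$ is invertible) gives
\[
\grr\drhoh=k\bigl[x_0^{-\delta}x_1,\dots,x_0^{-\delta}x_d\bigr]\cong k[y_1,\dots,y_d],
\]
a polynomial ring in $d$ variables. Passing to $G$, I would then check that $\grr\drho\cong k[y_1,\dots,y_d]\#\Gamma$. Two points need verifying. First, the coset representatives $g_i$ have $r$-norm $1$, hence lie in degree $0$, and the defect in $g_ig_j=h_{i,j}g_k$ disappears in the associated graded ring, exactly as in the proof of the Lemma above; so one obtains a genuine skew group ring and not merely a crossed product. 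Second, $\Gamma$ acts $k$-linearly on the $y_i$: since $H$ is normal and uniform, $g_jh_ig_j^{-1}=h_1^{c_1}\cdots h_d^{c_d}$ with $c_\ell\in\mathbb Z_p$, and since conjugation by $g_j$ is an isometry for $||\cdot||_r$ (as used already in the proof of Proposition \ref{norm}), not all of the $c_\ell$ can fail to be units; expanding $h_1^{c_1}\cdots h_d^{c_d}-1=\sum_\ell c_\ell b_\ell+(\text{terms of filtration degree}\geq 2\delta)$ shows that $\sigma(g_jb_ig_j^{-1})=\sum_\ell\bar c_\ell\,x_\ell$ with $\bar c_\ell\in k$ the reduction of $c_\ell$, whence $g_jy_ig_j^{-1}=\sum_\ell\bar c_\ell\,y_\ell$. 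Thus $S:=\grr\drho\cong k[y_1,\dots,y_d]\#\Gamma$, with $\Gamma$ acting linearly on $V:=\bigoplus_i ky_i$.

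With this description I would compute $K_0(S)$. Grading $S$ by $\deg y_i=1$ and $\deg\gamma=0$ for $\gamma\in\Gamma$ — which is legitimate precisely because the $\Gamma$-action on $V$ is linear — presents $S$ as a Noetherian $\mathbb N$-graded ring whose degree-zero part is $S_0=k\Gamma$, an Artinian, hence semiperfect, ring. For such a graded ring the base-change map $K_0(S_0)\to K_0(S)$, $[Q]\mapsto[S\otimes_{S_0}Q]$, is an isomorphism: it is split injective, a retraction being $[P]\mapsto[P/S_+P]$ (if $P$ is a direct summand of $S^n$, then $P/S_+P$ is a direct summand of $S_0^n$, hence projective over $S_0$, and $(S\otimes_{S_0}Q)/S_+(S\otimes_{S_0}Q)=Q$); and it is surjective by the standard fact that over an $\mathbb N$-graded ring with semiperfect degree-zero part every finitely generated projective module is isomorphic to a graded one, together with the graded Nakayama lemma, which forces a graded finitely generated projective $P$ to be of the form $S\otimes_{S_0}(P/S_+P)$. (Equivalently, one passes to the completion $\widehat S=k[[y_1,\dots,y_d]]\#\Gamma$, which is $(y_1,\dots,y_d)$-adically complete with $\widehat S/(y_1,\dots,y_d)=k\Gamma$, so $K_0(\widehat S)\cong K_0(k\Gamma)$ by Proposition \ref{idem}.) In either case $K_0(\grr\drho)=K_0(S)\cong K_0(k\Gamma)$.

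It remains to compute $K_0(k\Gamma)$. Since $k\Gamma$ is Artinian, $K_0(k\Gamma)$ is free abelian on the classes of the indecomposable projective $k\Gamma$-modules — the projective covers of the simple modules — so its rank equals the number of isomorphism classes of simple $k\Gamma$-modules. As $K$, and hence its residue field $k$, is a splitting field for $G/H$, Brauer's theorem identifies that number with the number $c$ of conjugacy classes of $G/H$ of order prime to $p$. Therefore $K_0(\grr\drho)\cong\mathbb Z^c$, as claimed.

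I expect the middle paragraph to be the main obstacle. Because $p$ may well divide $|G/H|$ — this already happens for $G=GL_2(\mathbb Z_p)$ with $p\geq 5$, where $G/H\cong GL_2(\mathbb F_p)$ — the ring $S=k[y_1,\dots,y_d]\#\Gamma$ need not have finite global dimension, so $G_0(S)$ and $K_0(S)$ genuinely differ, and none of the fundamental theorems, nor the identification of $G_0$ with $K_0$ for regular rings, can be used to compute $K_0(S)$. The reduction to $K_0(k\Gamma)$ must instead be driven by the positive grading on $S$ (equivalently, by the completion together with Proposition \ref{idem}); the rest is bookkeeping with graded rings and standard modular representation theory.
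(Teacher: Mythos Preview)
Your overall architecture matches the paper's exactly: identify $\grr\drho$ with the skew group ring $S=k[y_1,\dots,y_d]\#\Gamma$ over $S_0=k\Gamma$, show $K_0(S)\cong K_0(S_0)$, and compute $K_0(k\Gamma)\cong\mathbb Z^c$ via modular representation theory. Your split-injection $K_0(S_0)\hookrightarrow K_0(S)$ with retraction $[P]\mapsto[P/S_+P]$ is precisely the paper's Lemma~\ref{gr}, and your last paragraph is the paper's Lemma~\ref{k}.

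The gap is in the surjectivity of $K_0(S_0)\to K_0(S)$. You invoke as a ``standard fact'' that over an $\mathbb N$-graded ring with semiperfect $S_0$ every finitely generated projective is isomorphic to a graded one. This is not standard: already for $\Gamma=\{1\}$ it specializes to the statement that every finitely generated projective $k[y_1,\dots,y_d]$-module is free, which is the Quillen--Suslin theorem. Your alternative via completion establishes $K_0(\widehat S)\cong K_0(S_0)$, but it does not give $K_0(S)\cong K_0(\widehat S)$; you would still need $K_0(S)\to K_0(\widehat S)$ injective, and $S_+$ is not contained in the Jacobson radical of $S$, so Proposition~\ref{idem} does not apply to $S$ itself.

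The paper handles exactly this point in Lemma~\ref{e}, and it does use Quillen--Suslin, but on the \emph{commutative} polynomial subring $k[y_1,\dots,y_d]\subset S$. Concretely: one builds a map $\eta:(P/S_+P)\otimes_{S_0}S\to P$ from a splitting of $P\twoheadrightarrow P/S_+P$, observes that both source and target are finitely generated projective over $k[y_1,\dots,y_d]$ (since $S$ is free over this subring), hence free of the same rank by Quillen--Suslin, and then checks $\eta$ is an isomorphism by comparing degree-zero pieces for the polynomial grading. So your strategy is right, and the graded Nakayama step (b) is fine; but the ``every projective is graded'' step genuinely requires Quillen--Suslin (applied to the polynomial subring), and you should say so rather than packaging it as folklore.
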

\begin{proof}
\noindent We begin with two important lemmas.
\begin{lem}
\label{gr}\textnormal{$K_0(\grr K[G/H])$ is a direct summand of $K_0(\grr \drho)$.}
\end{lem}
\begin{proof}
\textnormal{ We have a surjective map of filtered rings \[ \varphi:\drho \rightarrow K[G/H] \] (with the induced filtration on $K[G/H]$) which maps and element $\xi = \sum g_i \lambda_i \in \drho$ to $\sum g_i k_0^{\lambda_i} \in K[G/H]$ where $k_0^{\lambda_i}$ is the constant term of the power series $\lambda_i$. This mapping induces a surjective homomorphism of graded rings
\[ \grr \varphi: \grr \drho \rightarrow  \grr K[G/H]\] the ring $\gr K[G/H]$ is isomorphic to $k [G/H]$. We also have a natural injection \[ \grr \psi: \grr K[G/H] \hookrightarrow \grr \drho\] It is easy to see that the composition of the two maps $\grr \varphi \circ \grr \psi = id_{\grr K[G/H]}$. We have seen in subsection $2.3$ that $K_0(-)$ is a functor from the category of rings to the category of Abelian groups so it takes identity to identity and that factorizes through the Grothendieck group of $\grr \drho$ but that means that $K_0(\grr K[G/H])$ is a direct summand of $K_0(\grr \drho)$ which is exactly the statement of the lemma.}
\end{proof}
\bigskip \noindent Let us denote by $\Phi$ and $\Psi$ the extention of scalars with respect to $\grr \varphi$ and $\grr \psi$. For example if we extend a $\grr \drho$-module $M$ then $M \otimes_{\grr \varphi} \grr K[G/H] \simeq M/MI$ where $I$ denotes the kernel of the morphism $\grr \varphi$. Meggondolni: $\Psi \circ \Phi$ projektivet projektivbe visz. 
 \begin{lem}\label{e}
\textnormal{ $K_0(\grr \drho)$ can be embedded into $K_0(\grr K[G/H])$.}
\end{lem}
\begin{proof} Its enough to show, that for a projective $\grr \drho$-module $P$ we have an isomorphism \[P \simeq P \otimes_{\grr \drho} \grr K[G/H] \otimes_{\grr K[G/H]} \grr \drho = \Phi(\Psi(P))\] First observe that $\Psi(P) = P/PI$ is projective as a $\grr K[G/H]$-module (since it will be a direct summand of a free module) and there is an injective $\grr K[G/H]$-module homomorphism into $P$ (since $P$ is a $\grr K[G/H]$-module in a natural way). Indeed we have the following  diagram

\bigskip
\hspace{4.5cm}
\xymatrix{
P \ar@{->>}[r]^{\alpha}
&P/PI \ar[d]^{id} \ar[r]
&0 \\
&P/PI \ar@{.>}[ul]_{\mu}}

\bigskip \noindent The  map $\mu$ is injective since this is a commutative diagramm. We have another $S$-module homomorphism for any S-module induced from an $R$-module $M$ by scalar extention \[\gamma_M:   M \otimes_{\grr K[G/H]} \grr \drho \rightarrow M \otimes_{\grr \drho} \grr \drho = M \] We can put these together  (denote $\gamma = \gamma_P$) and get an $R$-module homomorphism 
\label{eta} \[ \begin{CD} \label{1} \eta: P_1 = \Phi(\Psi(P)) @>\mu \otimes 1>> P \otimes_{\grr \drho} \grr K[G/H] @> \gamma \otimes 1 >> P 
\end{CD} \]
If we show that this is an isomorphism then we are done. Injectivity follows if we carefuly investigate the morphism. 
For surjectivity consider the subring $S=k[x_1,\dots,x_n]$ of $R = k[x_1,\dots,x_n][G/H]$. It is indeed a subring since $R$ is a skew group ring over $S$. 
Now it is easy to see that both \[ \Phi(\Psi(P)) = P/PI \otimes_{\grr K[G/H]} \gr \drho = P_1\] and $P$ are projective $S$-modules. Indeed it follows from the fact that both are projective over $R$ so they are direct summand of a finitely generated free $R$-module and $R$ is finitely generated free over $S$. We can also naturaly consider the map $\eta$ as $S$-module homomorphism.
We can put an $\mathbb{N}$-filtration and grading onto $S$ by the degree function and we can extend this filtration and grading onto $R$ by taking the maximum of the degrees of the coefficients of an element of $R$. In other words if we write the elements of $R$ as \[\lambda = \sum_{g_i \in G/H} p_i(\underline{x})g_i\] ($\underline{x} $ is a multivariable) \[\textnormal{Fil}^i(R) =\{ \lambda \in R \ | \ \textnormal{max}( \textnormal{deg}(p_i(\underline{x})) \leq i)\] So $R$ is a filtered and graded ring and the degree $0$th subring generates $R$ as an $S$-module. Since both $P_1$, $P$ are direct summands of $R^{n_i}$ ($i=\{1,2 \}$ and $n_i \in \mathbb{N}$), $P$ naturaly inherits the filtration and the grading and it is still true that $gr^0(P)$ generates $P$ as an $S$-module. One can see that  $\eta$ is a filtered and graded homomorphism, so it maps $gr^0(P_1)$ into $gr^0(P)$ . It is also true that they remain finitely generated over $S$ since $G/H$ is a finite group, so $R$ is a finitely generated $S$-algebra. One can now  use  the Theorem of Quilen-Suslin on projective modules over polynomial rings (for details see e.g. \cite{Qui} ), and see that both $P$ and $P/PI \otimes_{S} R=\Phi(\Psi(P))$ are graded-free modules of the same rank as $S$-modules and it is also true for the degree $0$ direct summands. Now let $P = <m_1,\dots,m_k>$ be a generating system of $P$. Then we see that $\overline{m_1} \otimes 1,\dots, \overline{m_k}\otimes 1$ generates $P_1$ as an $R$-module and now consider the degree $0$ submodules of $P$ and $P_1$. Observe that both $m_i$ and $\overline{m_i \otimes 1}$ are in the degree $0$ part of $P$ and $P_1$ respectively and nothing more, when we consider them as graded $S$-modules. If we follow the images of these elements we see that $\eta$ maps $\overline{m_i}\otimes 1$ to $m_i$. So the map in $\ref{eta}$ must be the natural isomorphism (of finitely generated free modules) on the degree $0$ part as $S$-modules, but it means that $P$ and $P_1$ are isomorphic as graded $S$-modules since the degree $0$ parts generate them but then they are isomorphic with the map $\eta$ as $R$-modules since it is still remains surjective.
\end{proof}
\begin{df} \label{split} \textnormal{Let $A$ be a finite group. A field $F$ is called splitting field for $A$  if, for any somple $F[G]$-module $V$, we have End$_{F[G]}(V) = F$.} 
\end{df}
\begin{lem}\label{k} \textnormal{The Grothendieck group of $k[G/H]$ is $\mathbb{Z}^c$ where $c$ is  the number of conjugacy classes relative prime to $p$ in $k[G/H]$.}
\end{lem}
\begin{proof} It follows from known facts in modular representation theory. By Corollary $3.2.4.$ in \cite{Mod} that the number of non-isomorphic classes of simple modules is equal to the number of $p$-regular conjugacy classes, i.e. the classes with order relative prime to $p$ (this is where one needs $K$ to be a splitting field of $G/H$).  Also $G/H$ is finite, so $k[G/H]$ is a finite dimensional $k$-algebra, and there is a  one-to-one correspondence between the indecomposable projective modules and the simple modules (for details, see e.g. \cite{Len} Theorem $7.1.$). Every finitely generated projective module can be decomposed into indecomposable projectives since they are direct summands of $k[G/H]^s$, for some $s \in \mathbb{N}$ and by the definition of the Grothendieck group, we see that $K_0(k[G/H]) \simeq \mathcal{Z}^c$.
\end{proof}
\noindent To finish the proof of Theorem \ref{grr} we need to use Lemma \ref{gr}, \ref{k} and \ref{e}.
\end{proof} 
\label{fil} \section{The global dimension of $\filg$}
\noindent From now we make an additional assumption on $K$. Namely if  $r = p^{a/b}$ then let $K$ be big enough that $\sqrt[b]{p} \in K$.
\noindent In this section we prove that the global dimension of $\filg$ is finite. Moreover it is equal to the global dimension of $\drho$. Let $R = \filg$ and $S = \drho$. We have the natural inclusion $R \hookrightarrow S$. In fact $S$ is just the localization of $R$ by the central, non-zero divisor $p$. It is easy to see that on $\filg$ the induced filtration is the following \[ \dots \subset  \textnormal{Fil}^2 \drho \subset \textnormal{Fil}^1\drho\subset \filg = \filg= \dots \] and the associated graded ring is just an $\mathbb{N}$-graded ring, i.e. \[\gr \filg = \grr \filg \oplus I\] where $I=\oplus_{n=1}^{\infty} \textnormal{gr}^n \filg$ is the augmentation ideal of $\gr \filg$. We have the usual projection and inclusion maps: $\pi: \gr \filg \to \grr \filg$ and $i: \grr \filg \hookrightarrow \gr \filg$. 
 \begin{df}
\textnormal{ Let $R$ be a filtered ring with filtration $FR$. We call a filtration on an $R$-module $M$ good, if there exist $m_1,\dots,m_n \in M$ and $k_1,\dots,k_n \in \mathbb{Z}$ such that for all $s \in \mathbb{Z}$ \[F^sM = \sum_{i=1}^n F^{s-k_i}R m_i \] }
\end{df}
\begin{rem} \textnormal{It can be shown for any filtered $R$-module that if $FM$ a good filtration then $\gr M$ is finitely generated over $\gr R$. Moreover since we have a Zariskian filtration on $\filg$ it is also true that good filtrations induce good filtrations on submodules and good filtrations are separated. For details see \cite{Hiu} Chapter I. section 5. and Chapter II.}
\end{rem}
\noindent Let $R$ be a filtered ring. We denote the category of finitely generated filtered modules over $R$ with 0 degree filtered homomorphisms (or simply filtered homomorphisms) $R$-Filt and $\gr R$-gr the category of finitely generated graded modules over the associated graded ring with degree 0 graded homomorphisms.  We also recall briefly what the projective objects in these two categories.
\begin{df}\textnormal{An object $L \in R$-filt is called filtered-free if it equipped with a good filtration such that the generators are free generators, i.e. there exist $e_1,\dots,e_n$ such that $F^sL = \oplus F^{s-k_i}Re_i$ and $e_i \in F^{k_i}-F^{k_i+1}$. }
\end{df}
\begin{lem}
\label{51} \textnormal{Let $M$ be a submodule of a finitely generated filtered-free module over $R$ (which means that it is an admissible module over $R$). Then $\gr_R(M) \otimes_{\gr R } \gr S \simeq \gr_S (M \otimes_R S)$.}
\end{lem}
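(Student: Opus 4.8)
The plan is to recognize the statement as an instance of the principle that passing to the associated graded commutes with localization at a central homogeneous non-zero-divisor, and then to verify the hypotheses of that principle in the present situation.

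\emph{Ring level.} First I would set up the localization $R=\filg\hookrightarrow S=\drho$ explicitly. Let $\pi$ be a uniformizer of $\mathcal O_K$ and $e=e(K/\mathbb Q_p)$, so $p=u\pi^e$ with $u\in\mathcal O_K^\times$; since (as noted in this section) $S$ is the localization of $R$ at $p$, we have $S=R[\pi^{-1}]$, with $\pi\in\f^{1/e}R$ --- here the standing assumption $\sqrt[b]{p}\in K$ for $r=p^{a/b}$ enters, forcing the $q_r$-values into $p^{\frac1e\mathbb Z}$ so that the grading is well behaved. A short computation with $q_r$ identifies the $q_r$-filtration on $S$ with the localized filtration $\f^sS=\bigcup_{n\ge0}\pi^{-n}\f^{s+n/e}R$. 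From the earlier identification $\gr(\drho)\cong\gr(\drhoh)\#(G/H)$ together with \cite{S1} (Theorem $4.5$), which gives $\gr(\drhoh)\cong k[x_0,x_0^{-1}][x_1,\dots,x_d]$ with $x_0=\sigma(\pi)$, and since $G/H$ acts trivially on $K$, the element $\sigma(\pi)=x_0$ is central and invertible in $\gr S$; hence it is a central non-zero-divisor in the graded subring $\gr R=\bigoplus_{s\ge0}\mathrm{gr}^sS$, and $\gr S=\gr R[\sigma(\pi)^{-1}]$. In particular $\gr_R M\otimes_{\gr R}\gr S=(\gr_R M)[\sigma(\pi)^{-1}]$.

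\emph{Module level.} Next I would pass to $M$. Write $M\subseteq L$ with $L$ finitely generated filtered-free over $R$, and equip $M$ with the subspace filtration $\f^sM=M\cap\f^sL$; as $R$ is Zariskian this is a good filtration, $\gr_R L$ is graded-free over $\gr R$, and $\gr_R M\hookrightarrow\gr_R L$. Flatness of $S$ over $R$ gives $M\otimes_R S\hookrightarrow L\otimes_R S$, and $L\otimes_R S$ is filtered-free over $S$ on the images of a basis of $L$; this is the filtered $S$-module whose graded the statement refers to. The hard part, I expect, will be the identification of filtrations
\[ (M\otimes_R S)\cap\f^s(L\otimes_R S)=\bigcup_{n\ge0}\pi^{-n}\,\f^{s+n/e}M , \]
i.e. that the subspace filtration on $M\otimes_R S$ coincides with the localized one. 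The inclusion $\supseteq$ is immediate; for $\subseteq$ I would write an element of the left-hand side both as $\pi^{-a}m$ with $m\in M$ and as $\pi^{-b}\ell$ with $\ell\in\f^{s+b/e}L$, clear denominators inside the $\pi$-torsion-free module $L$, and use $\pi^c\f^jL\subseteq\f^{j+c/e}L$ to bring it into the required form.

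\emph{Conclusion.} Granting the displayed equality, the statement amounts to the natural map $(\gr_R M)[\sigma(\pi)^{-1}]\to\gr_S(M\otimes_R S)$ being an isomorphism. Surjectivity is read off directly from that equality: a homogeneous element of $\gr_S(M\otimes_R S)$ is the symbol of some $\pi^{-n}m'$ with $m'\in\f^{s+n/e}M$, hence $\sigma(\pi)^{-n}$ times the image of $\sigma(m')$. For injectivity I would argue by a symbol chase: if the image of a class $\bar y\in\gr_R M$ represented by $y\in\f^iM$ dies in $\gr_S(M\otimes_R S)$ (which is all one needs, $\sigma(\pi)$ being invertible on the target), then $\pi^m y\in\f^{(i+m/e)+}L$ for some $m$; but if $\sigma(y)\in\mathrm{gr}^i_R L$ were nonzero, then $\sigma(\pi)^m\sigma(y)$ would be a nonzero symbol in $\mathrm{gr}^{i+m/e}_R L$ (freeness of $\gr_R L$ and $\sigma(\pi)$ a non-zero-divisor), forcing $\pi^m y\notin\f^{(i+m/e)+}L$ --- a contradiction, so $\bar y=0$. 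Alternatively, one may just invoke the general fact that for a morphism of Zariskian filtered rings whose associated graded is flat, $\gr$ commutes with base change on admissible modules (see \cite{Hiu}), applied to the flat localization $\gr R\to\gr R[\sigma(\pi)^{-1}]=\gr S$; in either approach the content is concentrated in the filtration identification above, everything else being the formal behaviour of symbols under localization at a central regular homogeneous element together with bookkeeping about where $\pi$ sits in the filtration.
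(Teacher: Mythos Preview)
Your proposal is correct and takes a genuinely different route from the paper. The paper argues via the natural graded epimorphism $\xi\colon \gr N\otimes\gr L\to\gr(N\otimes L)$, defined on symbols by $\sigma(x)\otimes\sigma(y)\mapsto\sigma(x\otimes y)$. Embedding $M$ in a filtered-free $F^n$ and using that $\gr S$ is flat over $\gr R$, one obtains a commutative square with injective rows
\[ \begin{CD}
0 @>>> \gr(M\otimes S) @>>> \gr(F^n\otimes S)\\
@. @A\xi AA @AA\xi A\\
0 @>>> \gr M\otimes\gr S @>>> \gr F^n\otimes\gr S
\end{CD} \]
in which the right vertical arrow is an isomorphism because $F^n$ is filtered-free; injectivity of the left $\xi$ then follows by a diagram chase, and surjectivity is the general property of $\xi$.

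You instead work concretely with $S=R[\pi^{-1}]$ and $\gr S=(\gr R)[\sigma(\pi)^{-1}]$, pin down the filtration on $M\otimes_R S$ explicitly as the localized filtration, and verify bijectivity of the comparison map by a direct symbol chase. Your approach makes the role of the central regular homogeneous element $\sigma(\pi)$ completely transparent and is more elementary; the paper's argument is shorter and would go through unchanged for any morphism of Zariskian filtered rings with $\gr S$ flat over $\gr R$, without singling out a localizing element. The only point to watch in your version is that the ``clear denominators'' step needs multiplication by $\pi$ to shift the filtration \emph{exactly} on a filtered-free module (equivalently $q_r(\pi\,\cdot\,)=|\pi|\,q_r(\,\cdot\,)$), which holds here because $\pi\in K$ is central and the norm on each $D_r(H,K)$-summand is multiplicative.
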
 
\begin{proof}
First of all, we know that $\gr S$ is a flat $\gr R$-module and both are noetherian rings. So by flatness property and the assumption on $M$, one can see that we have the following exact sequences \[ 0 \to \gr (M \otimes S) \hookrightarrow \gr (F^n \otimes S)\] \[ 0 \to \gr M \otimes \gr S \hookrightarrow \gr F^n \otimes \gr S \]   where $F^n$ is some finitely generated filtered-free  module over $R$ and $M \subset F^n$ as filtered module. Now if $N,L$ any two filtered modules over a filtered ring we have a graded-epimorphism  $\xi:\gr N \otimes \gr L \to \gr (N \otimes L)$ definied by $x_{(s)}=\sigma (x) \otimes \sigma (y)=y_{(t)} \mapsto (x \otimes y)_{s+t}$ for $x \in F_s(M)-F_{s-1}(N)$ and $y \in F_t(L) - F_{t-1}(L)$. So we have the following commutative diagram \[ \begin{CD} 
0 @>>> \gr(M \otimes S) @>>> \gr(F^n \otimes S)\\
@. @A \xi AA @A \xi AA \\
0 @>>> \gr M \otimes \gr S @>>> \gr F^n \otimes \gr S\\
\end{CD}
\]
Since $\xi$ is an epimorhism, we only need injectivity. Now one can see that the modules on the right hand side are isomorphic, since $F^n$ is filtered-free.  It follows that the map defined above between $\gr M \otimes \gr S$ and $\gr (M \otimes S)$ is also injective. Hence we are done.
\end{proof}
\begin{rem}\label{53} \textnormal{We know that $\gr S = \gr D_r(G,K) \simeq \grr R \otimes_k \gr K = \grr D_r(G,K) \otimes_k \gr K$. We also know that $\gr K = k[x_0,x_0^{-1}]$ so over $k$ it is a free module.  Let us assume that we have an exact sequence of $\grr R$-modules \[ 0 \to K \to N \to L \to 0 \]  the scalar extention by $\gr S$ is faithfuly flat since  $M \otimes_{\gr R} (\gr R \otimes_k \gr K) = (M \otimes_{\grr R} \grr R) \otimes_k \gr K \simeq M \otimes_k \gr K$ for any $\grr R$-module $M$ and $\gr K$ is a free module over $k$ hence tensoring with it is faithfuly flat.}
\end{rem}
\begin{lem}\label{52} \textnormal{Let $M$ be an admissible filtered module over $R$. Consider the functor $Q(M) = M/MI = M \otimes_{\gr R} \grr R$. Then $ Q(M) \otimes_{\grr R} \gr S \simeq \gr M \otimes_{\gr R} \gr S$.}
\end{lem}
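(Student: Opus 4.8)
The plan is to reduce the statement to a formal identity of tensor products, using only the definition of $Q$ and the description of $\gr S$ from Remark \ref{53}; here $R=\filg$ and $S=\drho$ as before.

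First I would record that $Q(M)=\gr M/(\gr M)I=\gr M\otimes_{\gr R}\grr R$. Read as $Q$ applied to the graded module $\gr M$ this is just the definition; if one instead wants $Q(M)$ to be the reduction of the filtered module $M$ modulo the ideal $\{\mu\in R:q_r(\mu)<1\}$ of $R$ whose associated graded is $I$, then one must check that forming $\gr$ commutes with this reduction, i.e.\ that the reduction map is strict. This is where admissibility of $M$ (rather than mere finite generation) enters: for the Zariskian filtration on $R$ a good filtration restricts to good, separated filtrations on the submodules in play, so strictness holds and $\gr(M/MI')\cong\gr M\otimes_{\gr R}\grr R$ (see \cite{Hiu}). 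In either reading we may take $Q(M)=\gr M\otimes_{\gr R}\grr R$ from now on.

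Then the rest is tensor bookkeeping. By Remark \ref{53} we have $\gr S\simeq\grr R\otimes_k\gr K$, and this isomorphism makes $\gr S$ a $\grr R$-algebra, hence a $\gr R$-algebra through the projection $\gr R\twoheadrightarrow\grr R$; with that structure $\grr R\otimes_{\grr R}\gr S$ is simply $\gr S$. Therefore, by associativity of $\otimes$,
\[
Q(M)\otimes_{\grr R}\gr S=\bigl(\gr M\otimes_{\gr R}\grr R\bigr)\otimes_{\grr R}\gr S\;\simeq\;\gr M\otimes_{\gr R}\bigl(\grr R\otimes_{\grr R}\gr S\bigr)\;\simeq\;\gr M\otimes_{\gr R}\gr S,
\]
which is the assertion. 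As a cross-check one can instead route through Lemma \ref{51}, which identifies $\gr M\otimes_{\gr R}\gr S$ with $\gr_S(M\otimes_R S)$: since $\gr S=\grr R\otimes_k\gr K$ is free — hence faithfully flat — over $\grr R$, it then suffices to compare the two sides on a finite generating set of $\gr M$.

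The main thing to watch is which $\gr R$-module structure on $\gr S$ is in force: the identity above is with respect to the structure coming from $\gr R\twoheadrightarrow\grr R\hookrightarrow\gr S$ fixed in Remark \ref{53}, and this must be kept consistent with the uses of $\gr S$ elsewhere in the section. Apart from that, and the strictness/admissibility input flagged in the first step, there is no genuine obstacle — the lemma just records that base change to $\gr S$ is compatible with the reduction functor $Q$.
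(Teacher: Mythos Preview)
There is a real gap. Your tensor-associativity chain
\[
(\gr M\otimes_{\gr R}\grr R)\otimes_{\grr R}\gr S\;\simeq\;\gr M\otimes_{\gr R}(\grr R\otimes_{\grr R}\gr S)\;\simeq\;\gr M\otimes_{\gr R}\gr S
\]
is only valid when $\gr S$ carries the left $\gr R$-structure pulled back through the projection $\gr R\twoheadrightarrow\grr R$. But that is \emph{not} the structure in play elsewhere: in Lemma~\ref{51}, and hence in its use inside Theorem~\ref{dim}, the symbol $\gr M\otimes_{\gr R}\gr S$ means base change along the ring map $\gr R\hookrightarrow\gr S$ induced by $R\hookrightarrow S$. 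Remark~\ref{53} does not fix the projection structure either; it only records that $\gr S$ is free (hence faithfully flat) over $\grr R$. The two $\gr R$-structures differ exactly on the ideal $I$: under the projection structure the central element $x_0\in I$ acts as $0$ on $\gr S$, so your identity collapses to the tautology $(\gr M/x_0\gr M)\otimes_{\grr R}\gr S\simeq(\gr M/x_0\gr M)\otimes_{\grr R}\gr S$ and says nothing about the honest localisation $\gr M\otimes_{\gr R}\gr S=\gr M[x_0^{-1}]$. Your own caveat (``the main thing to watch is which $\gr R$-module structure on $\gr S$ is in force'') is precisely the point where the argument fails, not a minor bookkeeping issue.

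The paper's proof supplies the missing content. Using the good filtration on the admissible module $M$ together with the standing hypothesis $\sqrt[b]{p}\in K$ (so that the value group of $\lVert\cdot\rVert_r$ is generated by $\lVert\pi\rVert$), it shows that each graded piece of $\gr M$ is obtained from the degree-$0$ piece by multiplying by a power of $x_0$, i.e.\ $\gr M\simeq(\gr M)_0\otimes_k k[x_0]$ as $k[x_0]$-modules. This freeness over $k[x_0]$ is exactly what makes the two $\gr R$-actions on $\gr S$ give the same tensor product: inverting $x_0$ on $\gr M$ directly agrees with first killing $x_0$ and then extending along $\grr R\hookrightarrow\gr S$. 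That step genuinely uses admissibility and the assumption on $K$; it cannot be replaced by a formal tensor identity.
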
 
\begin{proof}  For such a module $M$, the induced filtration by the fitlered-free module is again good, so the $n$th direct summand of associated graded module has the following form \[ G(M)_n = \sum G(R)_{n+k_i}  \sigma (u_i)\] \[\grr (M) = G(M)_0 = \sum G(R)_{k_i}  \sigma (u_i)\] By the assumption on $K$, the possible values of $|| \ ||$ are non-negative  powers of $||\pi||$, the uniformizer element, hence for this special kind of grading $G_0(M) \otimes_k k[x_0] \simeq G(M)$ and since $\gr R \simeq k[x_0,x_1,\dots,x_d]\# G/H, \gr S  \simeq k[x_0,x_0^{-1}[x_1,\dots,x_d]\#G/H$, moreover $x_0$ commutes with every element in these rings,  we have the following $G(M) \otimes_{k[x_0]} k[x_0,x_0^{-1}] \simeq G(M) \otimes_{\gr R} \gr S$. But $G_0 \otimes_{\grr R} \gr K \simeq G_0 \otimes_k k[x_0,x_0^{-1}]   \simeq G_0 \otimes_k k[x_0] \otimes_{k[x_0]} k[x_0,x_0^{-1}]$ Hence we are done.
\end{proof}
\begin{thm}\label{dim}
\textnormal
{The global dimension of the noetherian algebra $\filg$ is finite and equals to  gl.dim.$D_r(G,K) +1$.
}
\end{thm}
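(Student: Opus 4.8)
The plan is to compare $R:=\filg$ with $S:=\drho$, of which it is the localisation at the central non-zero-divisor $p$, and to run the projective-dimension estimates through the complete (hence Zariskian) filtration. Since $S=R[1/p]$ is a central localisation of $R$ one has at once $\mathrm{gl.dim}\,S\le\mathrm{gl.dim}\,R$, and by Theorem~\ref{dglob} the left side is finite and $\le d$. More precisely, writing $y_i:=b_i/\varpi$ for $\varpi\in K$ with $|\varpi|=r$ (such a $\varpi$ exists by the hypothesis on $K$), one identifies $\textnormal{Fil}^0(D_r(H,K))$ with the Tate algebra $\mathcal O_K\langle y_1,\dots,y_d\rangle$, so that $R=\mathcal O_K\langle y_1,\dots,y_d\rangle\ast(G/H)$ and $S=K\langle y_1,\dots,y_d\rangle\ast(G/H)$; since $\mathrm{char}\,K=0$ the latter is a crossed product of a regular ring of global dimension $d$ by a finite group of invertible order, whence $\mathrm{gl.dim}\,S=d$. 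It remains to prove $\mathrm{gl.dim}\,R\le d+1$.

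For the upper bound I would run the filtration argument. Let $M$ be a finitely generated $R$-module; a surjection from a filtered-free module exhibits a first syzygy which is admissible, so (at the cost of one in projective dimension) it is enough to bound $\mathrm{pd}_R M$ for $M$ admissible, equipped with a good filtration. Then $\mathrm{pd}_R M\le\mathrm{pd}_{\gr R}(\gr_R M)$ because the filtration is Zariskian. Writing $\gr R=\grr R\oplus I$ with $I=(x_0)$, so that $\gr R\simeq\grr R[x_0]$ with $x_0$ central, and noting that $\gr_R M$ is $x_0$-torsion free — in fact $\gr_R M\simeq Q(\gr_R M)\otimes_k k[x_0]$, as in the proof of Lemma~\ref{52} — one gets $\mathrm{pd}_{\gr R}(\gr_R M)=\mathrm{pd}_{\grr R}(Q(\gr_R M))$. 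Finally Lemmas~\ref{52} and~\ref{51} identify $Q(\gr_R M)\otimes_{\grr R}\gr S$ with $\gr_S(M\otimes_R S)$, while Remark~\ref{53} says that extension of scalars along $\grr R\hookrightarrow\gr S$ is faithfully flat, hence preserves projective dimension; altogether $\mathrm{pd}_R M\le\mathrm{pd}_{\grr R}(Q(\gr_R M))=\mathrm{pd}_{\gr S}(\gr_S(M\otimes_R S))$.

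Here the hypothesis that $G$ has no element of order $p$ becomes indispensable: this last quantity is not bounded by $\mathrm{gl.dim}\,\drho$, because $\gr\filg\simeq\grr D_r(G,K)[x_0]$ is a polynomial extension of $k[w_1,\dots,w_d]\ast(G/H)$, which has infinite global dimension whenever $p\mid|G/H|$ (localise at the $(G/H)$-fixed point and observe that the trivial module has self-$\mathrm{Ext}$ in every degree, since $H^{\ast}(G/H,k)$ does). One must instead exploit the mixed-characteristic structure of $R=\mathcal O_K\langle y_1,\dots,y_d\rangle\ast(G/H)$. As $R$ is module-finite and free over the commutative regular noetherian ring $A=\mathcal O_K\langle y_1,\dots,y_d\rangle$ (of global dimension $d+1$), its global dimension is the supremum over maximal ideals $\mathfrak m$ of $A$ of $\mathrm{gl.dim}(\widehat A_{\mathfrak m}\ast(G/H)_{\mathfrak m})$, where $(G/H)_{\mathfrak m}$ is the stabiliser of (the orbit of) $\mathfrak m$, after a Morita reduction over orbits. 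If $\mathfrak m$ does not lie over $(p)$ then $\widehat A_{\mathfrak m}$ has residue characteristic $0$, $|(G/H)_{\mathfrak m}|$ is invertible in it, and the contribution is $\mathrm{gl.dim}\,\widehat A_{\mathfrak m}\le d$. If $\mathfrak m$ lies over $(p)$ then $\widehat A_{\mathfrak m}$ is a complete regular local ring of mixed characteristic and dimension $d+1$; since every $\delta_h$ with $h\in H$ is $\equiv1$ modulo $\mathfrak m$, the cocycle of the crossed product is trivial modulo $\mathfrak m$, and $\widehat A_{\mathfrak m}\ast(G/H)_{\mathfrak m}$ is, up to Morita equivalence, the completed group algebra $\mathcal O_{K'}[[G_{\mathfrak m}]]$ of a compact $p$-adic analytic group $G_{\mathfrak m}$ built from $(G/H)_{\mathfrak m}$ and a rescaled copy of $H$. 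Because the relation $\bar g^{\,p}=\delta_{g^{p}}$ in the crossed product (with $g\in G$ a lift of $\bar g$) together with the injectivity of $\delta\colon H\hookrightarrow\widehat A_{\mathfrak m}^{\times}$ records exactly whether $g^{p}=1$ in $G$, the group $G_{\mathfrak m}$ has an element of order $p$ if and only if $G$ does; hence $G_{\mathfrak m}$ has none, and Theorem~\ref{okok} gives $\mathrm{gl.dim}(\widehat A_{\mathfrak m}\ast(G/H)_{\mathfrak m})=d+1$. Taking the supremum yields $\mathrm{gl.dim}\,R=d+1=\mathrm{gl.dim}\,\drho+1$.

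The main obstacle is this last paragraph: making the local-to-global reduction for the module-finite algebra $R$ over its commutative core precise (including the Morita reduction to stabilisers), and, above all, carrying out the identification of the complete local crossed products with Morita equivalents of completed group algebras, together with the faithful transfer of the ``no element of order $p$'' condition from $G$ to the auxiliary groups $G_{\mathfrak m}$. This is exactly where Brumer's computation (Theorem~\ref{okok}) is brought to bear, and where the contrast with the crude estimate by $\mathrm{gl.dim}\,\gr\filg$ — which would be infinite precisely when $p\mid|G/H|$ — is decisive.
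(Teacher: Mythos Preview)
Your second paragraph sets up exactly the right machinery --- Lemmas~\ref{51} and~\ref{52} and Remark~\ref{53} are precisely what the paper uses --- but you abandon it one step too early. The paper does \emph{not} try to bound $\mathrm{pd}_R M$ by $\mathrm{pd}_{\gr R}(\gr_R M)$ and then despair because $\gr R$ has infinite global dimension. Instead it works at the level of a single syzygy. Take a filtered-free resolution of (an ideal in) $R$ and tensor with $S=\drho$; since $\mathrm{gl.dim}\,S\le d$ by Theorem~\ref{dglob}, the $n$-th syzygy $M_n\otimes_R S$ is already projective for some $n\le d$. Now descend \emph{projectivity}, not a projective-dimension estimate: $\gr_S(M_n\otimes_R S)$ is graded-projective over $\gr S$, Lemmas~\ref{51} and~\ref{52} identify it with $Q(\gr M_n)\otimes_{\grr R}\gr S$, and faithful flatness (Remark~\ref{53}) forces $Q(\gr M_n)$ to be projective over $\grr R$. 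The remaining work is to show $\gr M_n\simeq Q(\gr M_n)\otimes_{\grr R}\gr R$ (the paper does this by an explicit map $\nu$ and a Nakayama-type argument with the functor $Q$), whence $\gr M_n$ is projective over $\gr R$ and $M_n$ is projective over $R$. No finiteness of $\mathrm{gl.dim}\,\gr R$ is ever invoked, and in particular the hypothesis ``$G$ has no element of order $p$'' is \emph{not} needed for Theorem~\ref{dim}; it enters the paper only later, for $\mathcal O_K[[G]]$ and $K[[G]]$.

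Your third paragraph, besides being unnecessary, has a genuine gap: $\textnormal{Fil}^0(D_r(H,K))$ is \emph{not} the commutative Tate algebra $\mathcal O_K\langle y_1,\dots,y_d\rangle$. The elements $b_i=h_i-1$ do not commute unless $H$ is abelian; only the associated graded ring is commutative. So there is no commutative regular base $A$ over which $R$ is module-finite free, the local-to-global reduction over $\mathrm{Spec}\,A$ has no footing, and the subsequent identification of the completed local crossed products with completed group algebras $\mathcal O_{K'}[[G_{\mathfrak m}]]$ collapses with it. The Morita/stabiliser reduction and the transfer of ``no element of order $p$'' to the auxiliary groups are likewise left as assertions. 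The fix is simply to return to your second paragraph and finish it along the lines above.
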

\begin{proof}
It is enough to find a finite projective resolution (such that the smallest upper bound for such resolutions is d) for right ideals $J \subseteq R$. We know that $J$ has a natural good filtration, which is the induced filtration by $R$. We can take a filtered-free resolution of $J$ with respect to this filtration. Let us assume that it is infinite. All the syzygys are also admissible modules and admit good filtrations induced by the corresponding filtered-free modules. Tensor this resolution by $S$ over $R$. By flatness property we get a filtered-free resolution of $J \otimes S$. Since $S$ is Auslander-regular ring with global dimension $d$, there is a syzygy $M_n \otimes S$ such that $n \leq d$ and it is a projective $S$-module, hence filtered-projective. Now apply $\gr ()$ and use the flatness property of the functor to get a graded-free resolution of $\gr (M \otimes S)$ with a graded-projective syzygy $\gr (M_n \otimes S)$. So we have the following finite graded-projective resolution \[ 0 \to \gr (M_n \otimes S) \to \dots \to \gr (F^0 \otimes S) \to \gr (J \otimes S) \to 0\] Using Lemma \ref{51} we see that all the modules in the resolution come from the modules in the initial resolution after scalar extention of their associated graded module over $\gr R$, i.e. we have \[ 0 \to \gr M_n \otimes \gr S \to \dots \gr F^0 \otimes \gr S \to \gr J \otimes \gr S \to 0 \] Now using Lemma \ref{52} we see that the syzygy $\gr M_n \otimes \gr S$ comes from the graded $0$ part of $M_n$ (as the image with respsect to the projection) after scalar extention (it is true for all the syzygys, but we only need the projective one). By Remark \ref{53} we know that scalar extention is faithfuly flat and now it follows that $Q(M_n) = \grr M_n$ (We abuse this notation) is graded-projective over $\grr R$. We have the natural projection $M_n \to \grr M_n = M_n/M_nI$ and since the latter is projective (it is projective as an $\gr R$-module since the action of $\gr R$ is via the projection $\gr R \to \grr R$), it induces a section $\gamma:\grr M_n = M_n/M_nI \to \gr M_n$, so $\gamma \circ \pi = id$. We show that $M_n/M_nI \otimes_{\grr R} \gr R \simeq \gr M_n$. First we have a map $\nu: M_n/M_nI \otimes_{\grr R} \gr R \to \gr M_n \otimes_{\grr R} \gr R \to \gr M$. The first map is $\gamma \otimes 1$, the second is just the multiplication map. If we apply the functor $Q(-)=(- \otimes_{\gr R} \grr R)$ again, we get the following \[ Q(\nu): M_n/M_nI \otimes_{\grr R} \gr R \otimes_{\gr R} \grr R \to \gr M_n \otimes_{\grr R} \gr R \otimes_{\gr R} \grr R \to \gr M \otimes_{\gr R} \grr R \] One can see that the module on the left is isomorphic naturally to $Q(\gr M_n)$, the middle one is just $\gr M_n$ and the module on the right is again isomorphic to $Q(\gr M_n)$. The maps are now the following \[\begin{CD} Q(\gr M_n)  @>\gamma >> \gr M_n @> \pi >> Q(\gr M_n) 
\end{CD} \] We know that $\gamma \circ \pi = id$, hence $Q(\nu)$ is a graded isomorphism. Let us assume that $\nu$ has a non-trivial cokernel. Then since $Q$ is right exact, $Q(coker\nu) \neq 0$ but we know for any non-zero graded module $N$ that $Q(N) \neq 0$. Hence the above assumption is contradicting to the fact that $Q(\nu)$ is an isomorphism. So at this point we have a surjective map $M_n/M_nI \otimes_{\grr R} \gr R \to \gr M_n$. Let us now assume that $\nu$ has a non-trivial kernel. $Q(M_n)$ is projective by the argument above, hence $M_n/M_nI \otimes_{\grr R} \gr R$ is also projective over $\gr R$ and we have the following \[ \begin{CD} 0 @>>> ker\nu @>>> M_n/M_nI \otimes_{\grr R} \gr R @> \nu >> \gr M_n @>>> 0 \\
@. @. @V \mu VV @V \pi VV  \\
@. @. \grr M_n @>=>> \grr M_n \\
@. @. @VVV @VVV\\
@. @. 0@.0\\
\end{CD} \]
 Both $\nu$ and $\pi$ are surjective, so $\mu: = \pi \circ \nu$ is also surjective and again since we saw that $Q(M_n) = \grr M_n$ is projective, we have a section $\varphi: \grr M_n \to M_n/M_nI \otimes_{\grr R} \gr R$ such that $\varphi\circ \mu = id$. But then $(\varphi \circ  \pi) \circ \nu = id$ so we have constructed a section, meaning that we have the following exact sequence \[ \begin{CD} 0 @<<< ker \nu @<<< M_n/M_nI \otimes_{\grr R} \gr R @<<< \gr M_n @<<< 0 \end{CD}\] Applying $Q(-)$ again and using the right exact proerty we get that \[ M_n/M_nI \otimes_{\grr R} \gr R \to ker \nu \to 0 \] is exact and we know again the fact that if a graded module $N$ is not zero, then $Q(N) \neq 0$, so since $Q(\nu)$ is an isomorphism, we get that $ker \nu =0$. It means that $\gr M_n$ is graded projective and for the filtration on $\filg$ is separated, exhaustive and complete, we can use Proposition I.$ 7.2.1$ in \cite{Hiu} to see that $M_n$ is filtered-projective. Hence we showed that there is a finite non-negative integer $n$ such that the $n$th syzygy in the filtered-free resolution is filtered-projective and it is a finite projective resolution of $J$ after applying the natural forgetful functor. Of course it is bounded above by $d$. Since $S$ is just the localization of $R$ by a central non-zero divisor, standard results show that the upper bound is in fact an equality.   Now let us assume that we have a short exact sequence \[ 0\to J \to R \to M \to 0\]  where $M$ is a cyclic module and $J$ its annihilator (which is a right ideal). By the well known relationship of the projective dimension function and short exact sequences (for details see e.g. $7.1.6.$ in \cite{CM}) the pr$_R(M)$ = pd$_R(J)+1$. It is also well known that its enough to compute the projective dimension of cyclic modules.
\end{proof}
\section{The Grothendieck group of $D_{\rho}(G,K)$}
\noindent In this section we prove first that the Grothendieck group of $\drho$ is the same as of the 0th graded subring of $\gr \drho$ if $G$ is is a pro-$p$ $p$-adic analytic group. It is still interesting and requires no additional argument, apart from using some well known results.  First recall from \ref{f} that when $G$ is uniform, we know that the ring $\grr \drho = \filg / \fillg \simeq k[x_1,\dots,x_d]$ and moreover when $G$ is any $p$-adic analytic group the quotient is the skew group ring $k[x_1,\dots,x_d]\#G/H$ where $H$ denotes the maximal uniform open normal subgroup, the associated graded ring is just $k[x_0,x_0^{-1}][x_1,\dots, x_d]$ and $k[x_0,x_0^{-1}][x_1,\dots,x_d]\#G/H$ in the uniform and general case respectively. 
We proved in the last section that the Grothendieck group of $\grr \drho \simeq \mathbb{Z}^c$.
\begin{lem}\label{filk}\textnormal{The Grothendieck group of $\filg$ is isomorphic to $\mathbb{Z}^c$.}
\end{lem}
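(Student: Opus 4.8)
The plan is to reduce the computation of $K_0(\filg)$ to the already-proved Theorem \ref{grr}, namely $K_0(\grr\drho)\cong\mathbb{Z}^c$, by showing that $\fillg$ is a \emph{complete} ideal in $\filg$ and then invoking Proposition \ref{idem}. First I would collect the structural facts that are already on the table: by Proposition \ref{norm} the submultiplicativity of $q_r$ makes $\fillg$ a two-sided ideal of $\filg$; the filtration on $\filg$ is complete and separated; the quotient is $\filg/\fillg\cong\grr\drho$ (recalled from subsection \ref{f}); and (with the filtration normalised to be $\mathbb N$-indexed, as arranged in Section \ref{fil} via the hypothesis $\sqrt[b]{p}\in K$) one has $\gr\filg\cong\grr\drho[x_0]$ with $x_0$ central of degree $1$, where $x_0=\sigma(\varpi)$ is the principal symbol of a uniformiser $\varpi\in K$. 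In particular $\varpi\in\fillg$ is a central non-zero-divisor and the augmentation ideal $I=\bigoplus_{n\ge 1}\textnormal{gr}^n\filg$ equals $(x_0)$.

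The main step is then to show that the $\fillg$-adic filtration on $\filg$ coincides with the given filtration, i.e.\ $\fillg^{\,n}=\textnormal{Fil}^n\filg$ for all $n$. One inclusion is obvious. For the other, I would compute the associated graded ideals inside $\gr\filg$: clearly $\gr(\textnormal{Fil}^n\filg)=\bigoplus_{m\ge n}\textnormal{gr}^m\filg=x_0^n\grr\drho[x_0]=I^n$, and the same holds for $\gr(\fillg^{\,n})$ because any homogeneous element $x_0^m\bar g$ with $m\ge n$ and $\bar g\in\grr\drho$ is the symbol of $\varpi^m g\in\fillg^{\,n}$, where $g\in\filg$ lifts $\bar g$ and where multiplication by the unit $\varpi$ is norm-multiplicative, so $\sigma(\varpi^m g)=x_0^m\bar g$. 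Since $\filg$ is Zariskian (we have $\fillg\subseteq J(\filg)$, as $1+\fillg$ consists of units by completeness, and $\gr\filg$ is noetherian), the finitely generated ideals $\fillg^{\,n}$ and $\textnormal{Fil}^n\filg$ are closed; and two nested closed submodules of a complete separated filtered module with the same associated graded coincide, whence $\fillg^{\,n}=\textnormal{Fil}^n\filg$. Therefore $\filg\cong\varprojlim\filg/\fillg^{\,n}$ with $\bigcap_n\fillg^{\,n}=0$, i.e.\ $\fillg$ is a complete ideal of $\filg$.

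Finally, Proposition \ref{idem} yields $K_0(\filg)\cong K_0(\filg/\fillg)=K_0(\grr\drho)$, which equals $\mathbb Z^c$ by Theorem \ref{grr}. The only non-formal point, and the part I expect to take the most care, is the middle paragraph: the identification $\gr(\fillg^{\,n})=\gr(\textnormal{Fil}^n\filg)$ together with closedness of these ideals, that is, verifying that the elementary comparison machinery for Zariskian filtrations (for which I would cite \cite{Hiu}) applies in this situation. It is worth noting that this lemma needs neither the finiteness of $\textnormal{gl.dim}\,\filg$ from Theorem \ref{dim} nor any regularity of $\grr\drho$; those enter only afterwards, when passing from $\filg$ to $K[[G]]$ and $\drho$.
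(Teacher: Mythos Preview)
Your proposal is correct and takes essentially the same approach as the paper: reduce to Theorem~\ref{grr} via Proposition~\ref{idem} by showing that $\filg$ is $\fillg$-adically complete. The paper simply asserts that the $\fillg$-adic filtration is cofinal with the norm filtration, whereas you prove the stronger equality $(\fillg)^{n}=\textnormal{Fil}^{n}\filg$ via Zariskian machinery; this works, though a shorter route is the direct observation that $\textnormal{Fil}^{n}\filg=\varpi^{n}\,\filg$ because $q_r(\varpi\mu)=|\varpi|\,q_r(\mu)$ for the central scalar $\varpi$.
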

\begin{proof} It is easy to see that the induced filtration on $\filg$ that we saw at the begining of Section \ref{fil} and the filtration induced by $\textnormal{Fil}^1(\drho)$ are cofinal. Hence $\filg$ is an -adic ring. We showed in Theorem \ref{grr} that $\grr = \filg / \fillg$ has Grothendieck group isomorphic to $\mathbb{Z}^c$.  Using Proposition \ref{idem} we know that by idempotent lifting we can lift the $K_0$ of the quotient $\filg /\fillg$ to $\filg$ and we get an isomorphism.
\end{proof}
\begin{thm} \label{4a}\textnormal{Let $G$ be as above, but assume in addition that it is pro-$p$. The Grothendieck group of $\drho$ is isomorphic to $\mathbb{Z}$.}
\end{thm}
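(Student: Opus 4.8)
\noindent The plan is to exploit that $\drho$ is the localisation of the ``unit ball'' $R:=\filg$ at the central non-zero divisor $p$ (Section~\ref{fil}), to run the localisation sequence in $G$-theory, and then to pin down the resulting cyclic group by a rank invariant coming from the uniform subgroup $H$.

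\noindent First I would dispose of the ring $R=\filg$. By Theorem~\ref{dim}, $R$ is noetherian of finite global dimension, hence regular, so $G_0(R)\simeq K_0(R)$ by Remark~\ref{global}. By Lemma~\ref{filk}, $K_0(R)\simeq\mathbb{Z}^c$, where $c$ is the number of conjugacy classes of $G/H$ of order prime to $p$. Since $G$ is pro-$p$, the finite group $G/H$ is a $p$-group, so its only conjugacy class of order prime to $p$ is the trivial one and $c=1$; thus $G_0(R)\simeq K_0(R)\simeq\mathbb{Z}$, and the generator is $[R]$ (idempotent lifting along $R\to\grr R$ carries the class of the free rank one module to the generator produced in Theorem~\ref{grr}).

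\noindent Next I would feed this into the localisation sequence. The ring $S:=\drho$ is Auslander regular of finite global dimension by Theorem~\ref{dglob}, so $G_0(S)\simeq K_0(S)$ as well. Since $S=R[1/p]$ with $p$ central and a non-zero divisor, the localisation exact sequence recalled after Theorem~\ref{loc} reads
\[ K_0(p\textnormal{-tors})\longrightarrow G_0(R)\longrightarrow G_0(S)\longrightarrow 0, \]
so $G_0(R)\to G_0(S)$ is surjective. Hence $K_0(S)\simeq G_0(S)$ is a cyclic quotient of $G_0(R)\simeq\mathbb{Z}$, generated by $[S]$; it remains only to show $[S]$ has infinite additive order, which forces $K_0(S)\simeq\mathbb{Z}$.

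\noindent For the last point I would use the uniform block. The algebra $A:=\drhoh$ is noetherian (Theorem~\ref{dglob}) and is a domain, because the norm $\|\cdot\|_r$ on $D(H,K)$ is multiplicative (\cite{S1}, Theorem~$4.5$) and therefore extends to a multiplicative norm on the completion $A$, which then has no zero divisors; a noetherian domain is Ore, so $A$ has a skew field of fractions $\mathcal{D}$. Moreover $S=\drho$ is free of rank $n:=[G:H]$ as a right $A$-module (Section~\ref{iwa}). Restricting scalars along $A\hookrightarrow S$, a finitely generated projective right $S$-module becomes a finitely generated projective right $A$-module, and $P\mapsto\dim_{\mathcal{D}}(P\otimes_A\mathcal{D})$ is additive on direct sums, hence defines a homomorphism $\rho\colon K_0(S)\to\mathbb{Z}$ with $\rho([S])=\dim_{\mathcal{D}}(S\otimes_A\mathcal{D})=n>0$. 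So $[S]$ is not torsion, and the cyclic group $K_0(S)$ must be infinite cyclic, i.e. $K_0(\drho)\simeq\mathbb{Z}$. The computations in the first two steps are bookkeeping with results already proved; the genuinely load-bearing point is the last one, where one needs \emph{some} rank-type invariant surviving on $K_0(\drho)$, and the cleanest source is the skew field of fractions of the uniform block $\drhoh$. For the general Theorem~\ref{main} one will instead have to produce $c$ independent such invariants, indexed by the blocks of $k[G/H]$; the pro-$p$ hypothesis here makes $c=1$, so a single invariant suffices.
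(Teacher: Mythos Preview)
Your argument is correct and follows the same skeleton as the paper: identify $K_0(\filg)\simeq\mathbb{Z}$ (using that $c=1$ for a pro-$p$ group, which you make explicit), push through the localisation sequence to get a surjection $\mathbb{Z}\twoheadrightarrow K_0(\drho)$, and then show $[\drho]$ has infinite order. The only real difference is in the last step: the paper simply invokes the invariant basis property of $\drho$ to obtain an injection $\mathbb{Z}\hookrightarrow K_0(\drho)$ and then appeals to the structure theorem for finitely generated abelian groups, whereas you construct an explicit rank homomorphism $K_0(\drho)\to\mathbb{Z}$ by restricting to the uniform block $\drhoh$, passing to its Ore skew field of fractions, and reading off the dimension. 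Your route is slightly longer but more self-contained---it actually \emph{proves} the IBN the paper asserts---and it foreshadows the strategy one needs for the general case, as you note in your final remark.
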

\begin{proof} Now $\drho$ is the localization of $\filg$ by the central regular element $p$. We use \ref{loc} to see that we hve the following exact sequence: \[K_0(tors-p) \to G_0(\filg) \to G_0(\drho) \to 0\] Since both $\filg$ and $\drho$ have finite global dimension we have a surjective map $\varphi: \mathbb{Z} \simeq K_0(\filg) \to K_0(\drho)$. Since $\drho$ has the invariant basis property, we have an injective map $\mathbb{Z} \hookrightarrow K_0(\drho)$. Since the Grothendieck groups are $\mathbb{Z}$-modules, we can use the structure theorem of finitely generated modules over principal ideal domains to conclude that $K_0(\drho) \simeq \mathbb{Z}$. 
\end{proof}
\begin{lem}\label{4b}\textnormal{ It is also true that $K_0(\gr \drho) \simeq K_0(\gr K[G/H])$. }
\end{lem}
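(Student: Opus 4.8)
The plan is to run the proof of Theorem~\ref{grr} once more, systematically replacing the zeroth graded piece $\grr(-)$ by the whole associated graded ring $\gr(-)$. Recall that, for $H$ uniform, $\gr\drho\simeq k[x_0,x_0^{-1}][x_1,\dots,x_d]\#G/H$, and since $G/H$ acts trivially on $K$ we have $\gr K[G/H]\simeq k[x_0,x_0^{-1}][G/H]$, an ordinary group ring over the Laurent polynomial ring $k[x_0,x_0^{-1}]$. The surjection $\varphi\colon\drho\to K[G/H]$ of Lemma~\ref{gr}, sending $\sum g_i\lambda_i$ to $\sum g_i k_0^{\lambda_i}$, is norm non-increasing, hence a filtered morphism, and one checks it induces a surjection of graded rings $\gr\varphi\colon\gr\drho\twoheadrightarrow\gr K[G/H]$; under the above identifications this map kills $x_1,\dots,x_d$, fixes $x_0$ and $G/H$, and has kernel $J=(x_1,\dots,x_d)$. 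Because $G/H$ fixes $x_0$, the natural inclusion $\gr\psi\colon\gr K[G/H]\hookrightarrow\gr\drho$ is a well-defined graded section, $\gr\varphi\circ\gr\psi=\mathrm{id}$. Functoriality of $K_0(-)$ on rings then gives, exactly as in Lemma~\ref{gr}, that $K_0(\gr K[G/H])$ is a direct summand of $K_0(\gr\drho)$, the splitting being $K_0(\gr\psi)$ followed by $K_0(\gr\varphi)$.

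Next I would reprove Lemma~\ref{e} in this setting: for a finitely generated projective $\gr\drho$-module $P$ (which we may take to be graded, realised as a graded summand of a standard free module), one forms the natural $\gr\drho$-linear map $\eta\colon P_1:=\gr\drho\otimes_{\gr K[G/H]}(P/PJ)\to P$ out of the section $P/PJ\hookrightarrow P$ and the multiplication map, and shows that $\eta$ is an isomorphism. As there, pass to the subring $T:=k[x_0,x_0^{-1}][x_1,\dots,x_d]\subseteq\gr\drho$, over which $\gr\drho$ is free of rank $|G/H|$ on the coset representatives; hence $P$ and $P_1$ are finitely generated projective over $T$. Grade $\gr\drho$ by total degree in $x_1,\dots,x_d$ \emph{only}, so that $k[x_0,x_0^{-1}]$ and the group elements sit in degree $0$; then $T$ becomes an $\mathbb{N}$-graded ring with degree-$0$ part $k[x_0,x_0^{-1}]$, $\gr\drho$ is generated over $T$ by its degree-$0$ part $k[x_0,x_0^{-1}][G/H]$, both $P$ and $P_1$ inherit the grading and are generated in degree $0$, and $\eta$ is a graded map. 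By Swan's extension of the Quillen--Suslin theorem to Laurent polynomial rings over a field (equivalently, by the graded Quillen--Suslin theorem over the principal ideal domain $k[x_0,x_0^{-1}]$), $P$ and $P_1$ are graded-free over $T$ with degree-$0$ generators; since $\eta$ restricts to an isomorphism of the degree-$0$ parts, the $T$-ranks agree, $\eta$ is onto, and therefore $\eta$ is an isomorphism of $\gr\drho$-modules. Hence $[P]=[P_1]$ for every finitely generated projective $P$, i.e.\ $K_0(\gr\psi)\circ K_0(\gr\varphi)=\mathrm{id}$ as well.

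Combining the two paragraphs, $K_0(\gr\varphi)$ and $K_0(\gr\psi)$ are mutually inverse, so $K_0(\gr\drho)\simeq K_0(\gr K[G/H])$, as asserted.

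The one genuinely new ingredient — and the place I expect to be the main obstacle — is the invocation of Quillen--Suslin in this last step: Lemma~\ref{e} only needed it over the polynomial ring $k[x_1,\dots,x_d]$, whereas here the relevant base ring is the Laurent polynomial ring $T=k[x_0,x_0^{-1}][x_1,\dots,x_d]$, so one must appeal either to Swan's theorem or to the graded form of Quillen--Suslin over the PID $k[x_0,x_0^{-1}]$. Everything else — the filtration/grading bookkeeping, the verification that $\eta$ is graded and surjective, and the passage from an isomorphism in degree $0$ to an isomorphism of $\gr\drho$-modules — is the same as in Lemma~\ref{e}.
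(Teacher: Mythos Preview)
Your proposal is correct and follows essentially the same route as the paper: the paper's own proof simply says to rerun Lemmas~\ref{gr} and~\ref{e} with $\gr(-)$ in place of $\grr(-)$, using that $x_0$ is central and $G/H$-fixed, which is exactly what you do. If anything you are more careful than the paper, which glosses over the point you flag --- that the Quillen--Suslin step now takes place over the Laurent polynomial ring $k[x_0,x_0^{-1}][x_1,\dots,x_d]$ and hence requires Swan's extension (or the graded version over the PID $k[x_0,x_0^{-1}]$).
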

\begin{proof} Analogous to the proof of \ref{gr} one can easly see that we have an injective and surjective morphism of filtered rings \[ k[x_0,x_0^{-1}][G/H] \hookrightarrow \gr \drho \rightarrow k[x_0,x_0^{-1}][G/H]\] and if we compose them we get the identity. So it shows that $K_0(k[x_0,x_0^{-1}][G/H])$ can be embedded into $K_0(\filg/\fillg)$. In the same way again as in lemma \ref{e} we can see that $K_0(\gr \drho)$ also can be embedded into $K_0(k[x_0,x_0^{-1}][G/H)$. The fact the we can do these is that $x_0$ is the principal symbol of the uniformizer element in $K$ so it commutes with every other element and the action of $G$ is trivial on it.
\end{proof}
\begin{rem}\textnormal{The only problem is that when a ring has infinite global dimension, it is not true that $K_0(R) \simeq G_0(R)$. As both $k[G/H]$ and $\gr \drho$ has most of the time infinite global dimension ($k$ characteristic $p$ and usually $p$ divides the cardinality of $G/H$), the Fundamental Theorem (Theorem \ref{fund})fails to give us any information regarding the classical Grothendieck group in which we are interested in, in fact it only provides us the following information: \[\mathbb{Z}^c \simeq G_0(\grr \drho) \simeq G_0(\gr \drho)\] and  \[G_0(k[x_0,x_0^{-1}][G/H]) \simeq G_0(k[G/H]) \] }
\end{rem} 
\noindent In the pro-$p$ case we automaticaly had in injective map $\mathbb{Z} \hookrightarrow K_0(\drho)$ and that was enough, but  we must follow another path, in order to prove our result in the general case.
\begin{rem}\textnormal{\label{xi} Recall that by  the localization theorem, we have the following exact sequence \[ \begin{CD}  K_0($p$-tors) @>\varphi>> G_0(\mathcal{O}_K[[G]]) @>\xi>> G_0(K[[G]])@>>> 0 \end{CD}\]
where $O_K$ is the ring of integers in $K$ and we have the same if we consider $\filg$, $\drho$ instead of $\mathcal{O}_K[[G]]$ and $K[[G]]$.}
\end{rem}
\section{Injectivity of $\xi$ }
\noindent  In this section we prove that the map $\xi$ defined in Remark \ref{xi} is also injective in the continuous distribution algebra case hence it is an isomorphism. We will call a  $R=\mathcal{O}[[G]]$-module strict $p$-torsion module if $Mp=0$.
\begin{lem}\label{61}\textnormal{Let $M$ be a strict $p$-torsion $R$-module. The image of $[M]$ with respect to $\varphi$ has finite order.}
\end{lem}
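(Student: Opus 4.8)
The plan is to show that the class $[M]$ of a strict $p$-torsion $R$-module (where $R = \mathcal{O}_K[[G]]$) becomes torsion in $G_0(R)$ once pushed forward by $\varphi$, by exhibiting a relation inside $K_0(p\textnormal{-tors})$ that is killed by $\varphi$. The key observation is that a strict $p$-torsion $R$-module $M$ is naturally a module over the quotient ring $\bar R \defeq R/pR \simeq k[[G]]$, and under the assumption that $G$ has no element of order $p$, the ring $k[[G]]$ has finite global dimension $d$ by Theorem \ref{okok}. First I would take a finite resolution of $M$ by finitely generated projective $k[[G]]$-modules, \[ 0 \to P_d \to \cdots \to P_1 \to P_0 \to M \to 0,\] which exists precisely because $\textnormal{gld}(k[[G]]) = d < \infty$. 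In the Grothendieck group $G_0(R)$ (where all these $\bar R$-modules live, via restriction along $R \twoheadrightarrow \bar R$) this gives $[M] = \sum_{i=0}^d (-1)^i [P_i]$, so it suffices to control the image of $[P_i]$ for a finitely generated projective $k[[G]]$-module $P$.

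Next I would analyse $[P]$ for $P$ a finitely generated projective $\bar R$-module, viewed as an $R$-module. The point is that $P$ lifts: since $pR$ is a complete (indeed the relevant) ideal of $R$, Proposition \ref{idem} gives $K_0(\bar R) \simeq K_0(R)$ by idempotent lifting, so there is a finitely generated projective $R$-module $\tilde P$ with $\tilde P/p\tilde P \simeq P$. Multiplication by $p$ on $\tilde P$ is injective (since $p$ is a central non-zero-divisor in $R$ and $\tilde P$ is a submodule of a free module), giving the short exact sequence \[ 0 \to \tilde P \xrightarrow{\,p\,} \tilde P \to P \to 0 \] of $R$-modules. Hence in $G_0(R)$ we get $[P] = [\tilde P] - [\tilde P] = 0$. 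Therefore $\varphi([M]) = [M]$ computed in $G_0(R)$ equals $\sum_{i=0}^d (-1)^i [P_i] = 0$; in particular it has finite order (indeed order one). So the actual content is that the finite global dimension of $k[[G]]$ lets us reduce to projectives, and the completeness of the ideal $pR$ lets us lift projectives and kill them via the $p$-multiplication sequence.

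The step I expect to require the most care is the bookkeeping around which Grothendieck group each class lives in and which maps are being applied: $[M]$ a priori is a class in $K_0(p\textnormal{-tors})$, and one must check that the chain of identities $[M] = \sum (-1)^i [P_i]$ and $[P_i] = 0$ is valid already in $K_0(p\textnormal{-tors})$ (or at least that $\varphi$ of it vanishes in $G_0(R)$) — the resolution of $M$ is by $\bar R$-projectives which are themselves strict $p$-torsion, so they are objects of $p\textnormal{-tors}$, and the exact sequences used are exact sequences of strict $p$-torsion modules, so the argument does take place in $K_0(p\textnormal{-tors})$ and $\varphi$ then carries it into $G_0(R)$ where the lifting argument applies. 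One subtlety is that the lift $\tilde P$ is not a $p$-torsion module, so the vanishing $[P] = 0$ genuinely happens after applying $\varphi$, not inside $K_0(p\textnormal{-tors})$; that is exactly why the statement only claims finite order of $\varphi([M])$ rather than vanishing of $[M]$ itself. A cleaner packaging, which I would also mention, is that restriction along $R \to \bar R$ together with $\textnormal{gld}(\bar R) < \infty$ induces a transfer map $K_0(\bar R) \to G_0(R)$, and the composite $K_0(\bar R) \to G_0(R) \xrightarrow{} G_0(R)$ factoring through multiplication by $p$ is zero; the lemma is the assertion that $[M]$ is in the image of $G_0(\bar R) \to G_0(R)$, which factors through $K_0(\bar R)$ since $\bar R$ is regular.
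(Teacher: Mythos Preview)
Your proof is correct and follows essentially the same approach as the paper: both use the finite global dimension of $k[[G]]$ to reduce to projective $\bar R$-modules, then lift these to projective $R$-modules via completeness of $pR$ and kill them with the multiplication-by-$p$ sequence $0 \to \tilde P \to \tilde P \to P \to 0$. The only cosmetic difference is that you take a full finite $\bar R$-projective resolution at once and conclude $\varphi([M])=0$ directly, whereas the paper unwinds the same argument inductively on projective dimension via the first syzygy $0 \to K/Fp \to F/Fp \to M \to 0$.
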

\begin{proof}The image of a  $p$-torsion module is itself since  the map $\varphi$ is induced by the natural inclusion $p-tors \subset \textnormal{mod}-R$. First we investigate the case when $M$ has global dimension $0$. If $M$ is free of rank one, we have a short exact sequence of $R$-modules \[ \begin{CD} 0 @>>> R @>\cdot p>> R @>\pi>> R/Rp @> >> 0 \end{CD}\] hence $[M]=[R/Rp] = [R] - [R] =0$. One can decude the same for any finitely generated free module. So the order of $M$ is $1$. \newline \noindent If $M$ is projective, we have $M \oplus Q \simeq F$. Since $\filg$ is $p$-adicaly complete,  we can take the idempotent lifting of $M$ to $R$ and denote it by $\overline{M}$. We know that it is projective and  have the following exact sequence of $R$-modules \[ \begin{CD} 0 @>>> \overline{M} @>\cdot p>> \overline{M} @>>> Coker @>>> 0 \end{CD}\] \noindent The cokernel and $M$ are isomorphic mod $p$ since we considered the idempotent lifting of $M$ with respect to the surjection $\pi:R \to R/Rp$. But it means that the classes are the same in $G_0(R)$, i.e. $[Coker]=[M]$. But the class  $[Coker] = [\overline{M}] - [\overline{M}] = 0$.
 \newline \noindent We call an $R$-module $p$-regular if $mp=0$ implies that $m=0$. Now we recall the following result:
\begin{thm}\label{pd}
\textnormal{Let $R$ be a ring, $x \in R$ a regular normal non-unit (regular means that it is neither left or right zero-divisor and normal means that $xR=Rx$), and suppose that $M$ is an $x$-regular $R$-module (analogously $x$-regular means that $mx=0 \Rightarrow m=0$). Then $\textnormal{pd}_{R/Rx} (M/Mx) \leq \textnormal{pd}_{R}(M)$. If moreover $R$ is noetherian, $M$ is finitely generated and $x \in J(R)$, then $\textnormal{pd}_{R/Rx} (M/Mx) = \textnormal{pd}_{R}(M)$.
}
\end{thm}
\begin{proof} It can be found in \cite{CM} Prop. $7.3.6.$
\end{proof}
\noindent We proceed by induction on the lenght of the minimal projective $R$-resolution of $M$. We saw in Section \ref{iwa} that $R/Rp = k[[G]]$ has finite global dimension. Let us suppose first that $n=1$ and  recall from Theorem $7.3.5.$ (i) in \cite{CM} that pd$_R(M)$= pd$_{R/Rp} +1$ hence $M$ is a projective $R$- modules. We saw that the image of the class of a projective $R/Rp$-module is 0. Let us suppose that the Lemma is true for $n-1$. For $n$ we have the following exact sequence \[ \begin{CD} 0 @>>> K @>>>F @>>> M @>>> 0 \end{CD}\] where $K$ is the first syzygy and $F$ is a free $R$-module. Since $Mp =0$ we see that $Fp \subseteq K$ hence we have the chain \[ F \supseteq K \supseteq pF \supseteq pK \supseteq p^2F \supseteq p^2K \supseteq \dots \] hence the following sequence of $R/Rp$-modules is exact \[ \begin{CD} 0 @>>> K/Fp @>>> F/Fp @>>> M @>>> 0 \end{CD}\] Now this is also an exact sequence of $R$-modules with the $R$-action induced by the surjection $\pi:R \to R/Rp$ (in this case the category of $p$-torsion modules is a Serre-subcategory in $\textnormal{mod}-R$ and the objects in the category of $R/Rp$-modules are naturaly $p$-torsion $R$-modules and these objects form an exact subcategory in $p$-tors). By assumption $\textnormal{pd}_R(M) =n$ and since $\textnormal{pd}_R(F) =0$ (moreover it is a free $R/Rp$-module), using Theorem $\ref{pd}$ one sees that $\textnormal{pd}_R(F/Fp) =1$. Now by the properties of the projective dimension function (for details see $7.1.6.$ in \cite{CM}) we see that $\textnormal{pd}_R(K/Fp) \leq n-1$ (except the case when $\textnormal{pd}_R(M) = 1$ which we have already handled). But $K/Fp$ is also a strict $p$-torsion module since it is an $R/Rp$-module, hence the induction applies. So the order of $[K/Fp]$ is finite, say $k$. Now summing the exact sequence above finitely many times we get the following  \[ \begin{CD} 0 @>>> \oplus K/Fp @>>> \oplus F/Fp @>>> \oplus M @>>> 0 \end{CD}\] By the definition of the Grothendieck group we know that $[\oplus_s N]=[N]^s = s[N]$ for any finitely generated $R$-module $N$. So $[M]^k = [F/Fp]^k - [K/pF]^k$. Now by Theorem \ref{pd} we know that $F/Fp$ is projective and it is a strict $p$-torsion module so the order of the image of  its class is $1$. Hence we are done, since $k \geq 1$.
\end{proof}
\begin{lem}\label{tor}\textnormal{ Let us assume that $M$ is a general $p$-torsion $R$-module. The order of the image of its class has finite order.}
\end{lem}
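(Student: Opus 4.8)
The plan is to reduce the general case to the strict $p$-torsion case handled in Lemma \ref{61}. Let $M$ be a finitely generated $p$-torsion $R$-module, so $Mp^n = 0$ for some $n \geq 1$. The idea is to filter $M$ by powers of $p$: consider the chain
\[ M \supseteq Mp \supseteq Mp^2 \supseteq \dots \supseteq Mp^{n-1} \supseteq Mp^n = 0. \]
Each quotient $Mp^i/Mp^{i+1}$ is annihilated by $p$, hence is a strict $p$-torsion $R$-module (and finitely generated, since $R$ is noetherian). By the defining relations of the Grothendieck group, the short exact sequences $0 \to Mp^{i+1} \to Mp^i \to Mp^i/Mp^{i+1} \to 0$ give, after telescoping,
\[ [M] = \sum_{i=0}^{n-1} [Mp^i/Mp^{i+1}] \]
in $G_0(R)$, and applying the homomorphism $\varphi$ yields $\varphi([M]) = \sum_{i=0}^{n-1} \varphi([Mp^i/Mp^{i+1}])$.

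Now each term $\varphi([Mp^i/Mp^{i+1}])$ has finite order by Lemma \ref{61}, since $Mp^i/Mp^{i+1}$ is a strict $p$-torsion module. A finite sum of finite-order elements in an abelian group has finite order, so $\varphi([M])$ has finite order. This is exactly the statement of the lemma, and the same argument applies verbatim with $\filg$ in place of $\mathcal{O}_K[[G]]$, since Lemma \ref{61} was proved in that generality as well.

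I do not expect a serious obstacle here; the only point requiring a small amount of care is that $Mp^i/Mp^{i+1}$ is genuinely finitely generated over $R$ — which follows from $M$ being finitely generated and $R$ noetherian — and that it carries an $R/Rp$-module structure so that Lemma \ref{61} literally applies. One could alternatively invoke Devissage directly: the strict $p$-torsion modules form a subcategory through which every $p$-torsion module is finitely filtered, so on $K_0$ of the torsion category every class is a sum of strict $p$-torsion classes; but the explicit $p$-adic filtration is more transparent and avoids checking the hypotheses of the Devissage theorem. Either way, the content is that $\varphi$ sends torsion classes to torsion classes, which is precisely what is needed to eventually conclude that $\xi$ is injective modulo torsion and hence, combined with the finite global dimension results, an isomorphism.
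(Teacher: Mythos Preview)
Your proof is correct and follows essentially the same approach as the paper: filter $M$ by powers of $p$, note that the successive quotients are strict $p$-torsion, apply Lemma \ref{61} to each, and use that a finite sum of finite-order elements in an abelian group has finite order. The paper phrases the last step as taking the least common multiple of the individual orders and mentions Devissage explicitly (which you note as an alternative), but the content is identical.
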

\begin{proof} Once again we recall from Remark \ref{xi} that we have the following exact sequence \[ \begin{CD}  K_0($p$-tors) @>\varphi>> G_0(\mathcal{O}_K[[G]]) @>\xi>> G_0(K[[G]])@>>> 0 \end{CD}\] Define $R : = \mathcal{O}_K[[G]]$ again. By Devissage Theorem, we can identify the first group with $G_0(R/Rp)$ since for any finitely generated $p$-torsion module $M$, there exist a positive integer $n$, such that $Mp^n = 0$ therefore there is a filtration  $M \supset Mp \supset Mp^2 \supset \dots \supset Mp^{n-1} \supset 0$ and all the quotients are $R/Rp$-modules. So the class $[M]$ is equal to the class $\sum[Mp^{i-1}/Mp^{i}]$. All the modules $Mp^{i-1}/Mp^{i}$ are finitely generated strict $p$-torsion modules. Hence by Lemma \ref{61} the classes of these modules in $G_0(\textnormal{mod}-R)$ have finite order. Since all the Grothendieck groups are Abelian, we have the following equality: \[([M])^k=k[M] =\sum k [Mp^{i-1}/Mp^{i}] = \sum ([Mp^{i-1}/Mp^{i}])^k\] Let $k_0$ be the smallest common multiple of the orders and let $k=k_0$. Using the equality above  we are done.
\end{proof}
\begin{pro}\textnormal{The Grothendieck group of $\mathcal{O}_K[[G]]$ is isomorphic to $\mathbb{Z}^c$.}
\end{pro}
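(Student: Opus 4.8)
The plan is to compute $K_0(\mathcal{O}_K[[G]])$ by descending, through idempotent lifting, all the way to the ordinary group algebra $k[G/H]$ of the finite group $G/H$, where the answer is supplied by modular representation theory.

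Write $R=\mathcal{O}_K[[G]]$. First I would reduce modulo $\pi$: since $R=\varprojlim_N \mathcal{O}_K[G/N]$ with each $\mathcal{O}_K[G/N]$ finite free over $\mathcal{O}_K$, the ring $R$ is $\pi$-adically complete, $\pi$ is central and topologically nilpotent (so $\pi R\subseteq \mathrm{rad}(R)$), and $R/\pi R=\varprojlim_N k[G/N]=k[[G]]$; hence Proposition \ref{idem} gives $K_0(\mathcal{O}_K[[G]])\simeq K_0(k[[G]])$. Next I would pass from $k[[G]]$ to $k[G/H]$ using the crossed-product decomposition $k[[G]]=k[[H]]*G/H$ from Section \ref{iwa}: $k[[G]]$ is free of rank $n=[G:H]$ as a left $k[[H]]$-module, and since $H$ is uniform, $k[[H]]$ is a complete local ring whose maximal ideal $\mathfrak{m}$ is its augmentation ideal (its associated graded ring being $k[x_1,\dots,x_d]$). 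Because $H$ is normal in $G$, the ideal $J=\mathfrak{m}\,k[[G]]=k[[G]]\,\mathfrak{m}$ is two-sided, lies in $\mathrm{rad}(k[[G]])$, and presents $k[[G]]$ as a $J$-adic ring (being finite free over the $\mathfrak{m}$-adically complete ring $k[[H]]$), with $k[[G]]/J=\bigoplus_{i=1}^n (k[[H]]/\mathfrak{m})\,\overline{g_i}=k[G/H]$. A second application of Proposition \ref{idem} yields $K_0(k[[G]])\simeq K_0(k[G/H])$. Finally Lemma \ref{k} identifies $K_0(k[G/H])$ with $\mathbb{Z}^c$, $c$ being the number of $p$-regular conjugacy classes of $G/H$ — this is the point at which the hypothesis that $K$ (equivalently $k$) is a splitting field of $G/H$ enters. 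Concatenating the three isomorphisms proves the proposition. (Equivalently one can telescope the two lifting steps: $\mathcal{O}_K[[G]]$ is $I$-adically complete for $I=\ker(\mathcal{O}_K[[G]]\twoheadrightarrow k[G/H])$ with $I\subseteq\mathrm{rad}$, so $K_0(\mathcal{O}_K[[G]])\simeq K_0(k[G/H])$ directly.)

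The only point requiring care — though entirely standard — is verifying the hypotheses of Proposition \ref{idem}: that $\mathcal{O}_K[[G]]$ and $k[[G]]$ are complete with respect to $\pi R$ and $J$ respectively, and that these ideals lie in the Jacobson radical. This rests on the pseudo-compactness of completed group algebras (the $\pi$-adic, resp. $\mathfrak{m}$-adic, topology coinciding with the profinite one) together with the structure of $k[[H]]$ for $H$ uniform recalled in Section \ref{iwa}; no new ideas are needed, and everything else is a formal chase through Proposition \ref{idem} and Lemma \ref{k}. I would also record here the observation that feeds the later transport to $K[[G]]$: by Theorem \ref{okok} the ring $\mathcal{O}_K[[G]]$ has finite global dimension, so $G_0(\mathcal{O}_K[[G]])\simeq K_0(\mathcal{O}_K[[G]])\simeq\mathbb{Z}^c$ is torsion-free; combined with Lemma \ref{tor}, which says every class in the image of $\varphi$ has finite order, this forces $\varphi=0$ in the localization sequence of Remark \ref{xi}, so that $\xi$ is an isomorphism.
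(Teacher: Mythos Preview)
Your proposal is correct and follows essentially the same route as the paper: reduce $K_0(\mathcal{O}_K[[G]])$ to $K_0(k[G/H])$ via idempotent lifting (Proposition \ref{idem}) along the augmentation $\mathcal{O}_K[[G]]\twoheadrightarrow k[G/H]$, then invoke Lemma \ref{k}. The paper does this in a single step by observing that the kernel of this augmentation lies in the Jacobson radical, which is precisely the ``telescoped'' version you record at the end; your two-step factoring through $k[[G]]$ simply unpacks the same argument with more care about the completeness hypotheses, and your closing remarks about $G_0\simeq K_0$ and the vanishing of $\varphi$ anticipate the paper's subsequent Proposition \ref{kcomp}.
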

\begin{proof}\textnormal{ The quotient ring $\mathcal{O}_K[[G]]/\textnormal{Jac}(\mathcal{O}_K[[G]])$ where Jac($\mathcal{O}_K[[G]])$ denotes the Jacobson radical is the same as $k[G/H]/\textnormal{Jac}(k[G/H])$ because the kernel of the augmentation map $\mathcal{O}_K[[G]] \to k[G/H]$ is a subset of the Jacobson radical of $\mathcal{O}_K[[G]]$. Therefore \[K_0(\mathcal{O}_K[[G]]) \simeq K_0(k[G/H])\] By Lemma \ref{k} we are done. }
\end{proof}
\noindent Using the Lemma \ref{tor} one gets the following:
\begin{pro}\label{kcomp}
\textnormal{ Let $G$ by a compact $p$-adic analytic group and assume in addition that it has no element of order $p$. Then $K_0(K[[G]]) \simeq \mathbb{Z}^c$.
}
\end{pro}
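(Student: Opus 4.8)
The plan is to feed the localization exact sequence of Remark \ref{xi} into the torsion estimate of Lemma \ref{tor}, exploiting that the relevant Grothendieck groups are the torsion-free groups $\mathbb{Z}^c$.

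First I would set $R \defeq \mathcal{O}_K[[G]]$ and observe that, since $G$ has no element of order $p$, Theorem \ref{okok} makes $R$ a noetherian ring of finite global dimension $d+1$; hence by Remark \ref{global} the Cartan map is an isomorphism $K_0(R) \xrightarrow{\sim} G_0(R)$, and by the proposition proved just above this group is $\mathbb{Z}^c$. The ring $K[[G]] = R[1/p]$ is the localization of $R$ at the central non-zero-divisor $p$, so it is again noetherian and of finite global dimension (localizing at a central multiplicative set does not raise the global dimension), whence $K_0(K[[G]]) \simeq G_0(K[[G]])$. So it suffices to identify $G_0(K[[G]])$.

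Next I would invoke the localization sequence from Remark \ref{xi},
\[ K_0(p\textnormal{-tors}) \xrightarrow{\varphi} G_0(R) \xrightarrow{\xi} G_0(K[[G]]) \to 0 , \]
in which $\xi$ is already surjective. To conclude that it is an isomorphism I must show $\varphi = 0$. The group $K_0(p\textnormal{-tors})$ is generated by the classes $[M]$ of finitely generated $p$-torsion $R$-modules, and Lemma \ref{tor} says that each $\varphi([M])$ has finite order in $G_0(R)$; hence $\mathrm{im}(\varphi)$ is a torsion subgroup of $G_0(R) \simeq \mathbb{Z}^c$. Since $\mathbb{Z}^c$ is torsion-free this forces $\mathrm{im}(\varphi) = 0$, so $\xi$ is an isomorphism and $G_0(K[[G]]) \simeq G_0(R) \simeq \mathbb{Z}^c$. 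Combined with the first step, $K_0(K[[G]]) \simeq \mathbb{Z}^c$, which is the assertion.

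The real content here is Lemma \ref{tor} (via the strict-$p$-torsion case of Lemma \ref{61}), which is exactly what keeps the torsion part of $G_0(R)$ from surviving into $G_0(K[[G]])$; the remainder is the bookkeeping that both rings are regular, so that $G_0$ and $K_0$ coincide, together with the elementary fact that $\mathbb{Z}^c$ has no torsion. The only mildly delicate point is the regularity of $K[[G]]$ itself, for which one uses that a localization of a noetherian ring of finite global dimension at a central multiplicative set is again of finite global dimension.
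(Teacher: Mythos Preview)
Your proposal is correct and follows essentially the same approach as the paper: both arguments use Theorem \ref{okok} and the preceding proposition to identify $G_0(\mathcal{O}_K[[G]])\simeq\mathbb{Z}^c$, invoke Lemma \ref{tor} together with the torsion-freeness of $\mathbb{Z}^c$ to kill $\mathrm{im}(\varphi)$ in the localization sequence of Remark \ref{xi}, and then pass from $G_0$ to $K_0$ via Remark \ref{global} using that $K[[G]]$, as a central localization of a ring of finite global dimension, again has finite global dimension. The only difference is cosmetic ordering of the steps.
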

\begin{proof} By Theorem \ref{okok} and Remark \ref{global}, $G_0(\mathcal{O}_K[[G]]) \simeq \mathbb{Z}^c$ so it a free abelian group without torsion elements. By Lemma \ref{tor}  all the images with respect to $\varphi$ go to zero. This means that $\xi$ is also injective. The global dimension of $\mathcal{O}_K[[G]]$ is finite and $K[[G]]$ is just the localization of $O_K[[G]]$ hence its global dimension of bounded above by the global dimension of $\mathcal{O}_K[[G]]$. By Remark \ref{global} we are done.
\end{proof}
\subsection{Proof of Theorem \ref{main}}\noindent  Now by Theorem \ref{dim} and  Lemma \ref{filk} and Remark \ref{global} we have that  $K_0(\filg) \simeq G_0(\filg)=\mathbb{Z}^c$. Hence by the Localization sequence that we used may times we have a surjective map $\xi: \mathbb{Z}^c \to G_0(\drho)$. By Remark \ref{global} and Theorem \ref{dglob} we have that $K_0(\drho) \simeq G_0(\drho)$. Unfortunately we cannot use the proof we have in the continuous case since the quotient ring $\filg/\filg\cdot p$ has infinite global dimension. But we have Theorem $5.2.$ in \cite{S1} so by the faithfully flatness property we have a map  $K_0(K[[G]]) \hookrightarrow K_0(\drho)$ and it is injective. By Proposition \ref{kcomp} we have both an injective and a surjective map between $\mathbb{Z}^c$ and $K_0(\drho)$. These are all abelian groups hence $\mathbb{Z}$-modules. Using the structure theorem of finitely generated module over a PID we get an isomorphism $\mathbb{Z}^c \simeq K_0(\drho)$. 

\end{document}